\documentclass[12pt,oneside]{amsart}
\usepackage{amsmath,amssymb,latexsym,soul,cite,mathrsfs}
\usepackage{tikz}
\usepackage{color,graphicx}
\usepackage[colorlinks=true,urlcolor=blue,
citecolor=blue,linkcolor=blue,linktocpage,pdfpagelabels,
bookmarksnumbered,bookmarksopen]{hyperref}
\usepackage[english]{babel}
\usepackage{lineno}

\usepackage[left=2.7cm,right=2.7cm,top=3cm,bottom=3cm]{geometry}

\usepackage[hyperpageref]{backref}



\pretolerance=10000

\numberwithin{equation}{section}
\newtheorem{theorem}{Theorem}[section]
\newtheorem{lemma}[theorem]{Lemma}
\newtheorem{corollary}[theorem]{Corollary}
\newtheorem{proposition}[theorem]{Proposition}

\newtheorem{remark}[theorem]{Remark}

\newcommand{\R}{{\mathbb R}}

\renewcommand{\epsilon}{\varepsilon}

\renewcommand{\rightarrow}{\to}

\newcommand{\Real}{\mathbb{R}}


\title[]{Normalized solutions for  INLS equation with  critical Hardy-Sobolev type nonlinearities}  

\author[M. Cardoso]{M. Cardoso}

\author[J.F. de Oliveira]{J.  F.  de Oliveira}\thanks{Second author was supported by CNPq grant  number 309491/2021-5.}
\author[O.H. Miyagaki]{O. H. Miyagaki}\thanks{Third author was supported by Grant 2019/24901-3- S\~ao Paulo Research Foundation (FAPESP) and Grant 303256/2022-2 - CNPq/Brazil. }

\address[M. Cardoso]{\newline\indent Department of Mathematics
\newline\indent 
Federal University of Piau\'{\i}
\newline\indent
 64049-550 Teresina, PI, Brazil}
\email{\href{mailto:mykael@ufpi.edu.br}{mykael@ufpi.edu.br }}

\address[J.F. de Oliveira]{\newline\indent Department of Mathematics
\newline\indent 
Federal University of Piau\'{\i}
\newline\indent
 64049-550 Teresina, PI, Brazil}
\email{\href{mailto:jfoliveira@ufpi.edu.br}{jfoliveira@ufpi.edu.br}}

\address[O.H. Miyagaki]{\newline\indent Department of Mathematics
\newline\indent 
Federal University of S\~ao Carlos,
\newline\indent
13565-905 S\~ao Carlos, SP, Brazil}
\email{\href{mailto:ohmiyagaki@gmail.com}{ohmiyagaki@gmail.com}}

\subjclass[2000]{35B08, 35J75, 35J92, 35B33}
\keywords{Normalized solutions;  Nonlinear Schr\"{o}dinger equation;  Elliptic equations; Laplace operator}

\begin{document}
\begin{abstract}  We are interested in finding  prescribed $L^2$-norm   solutions
to  inhomogeneous  
nonlinear Schr\"{o}dinger  (INLS) equations.  For $N\ge 3$ we treat  the  equation with  combined Hardy-Sobolev power-type nonlinearities
$$
 -\Delta u+\lambda u=\mu|x|^{-b}|u|^{q-2}u+|x|^{-d}|u|^{2^*_{d}-2}u \;\;\mbox{in}\;\; \mathbb{R}^N,\, N\ge 3
$$
where $\lambda\in\mathbb{R}$, $\mu>0$,  $0<b,d<2$, $2+(4-2b)/N<q<2+(4-2b)/(N-2)$ and $2^*_{d}= 2(N-d)/(N-2)$  is the Hardy-Sobolev critical exponent, while  for $N=2$ we investigate the equation with critical exponential growth
\begin{equation}\nonumber
\begin{aligned}
 &-\Delta u+\lambda u=|x|^{-b}f(u) \;\;\mbox{in}\;\; \mathbb{R}^2
\end{aligned}
\end{equation} 
where the  nonlinearity $f(s)$ behaves like $\exp(s^2)$ as $s\to\infty$.  We  extend the  existence results due to Alves-Ji-Miyagaki (Calc. Var.  61, 2022)  from $b =d= 0$ to the case $0 < b,d < 2$.
\end{abstract}
\maketitle



\section{Introduction and main results}
Let us consider  the inhomogeneous nonlinear Schr\"odinger (INLS) equation with combined power-type nonlinearities
\begin{equation} \label{PVI}
i \psi_{t} + \Delta \psi +\mu|\psi|^{q-2}\psi+|\psi|^{p-2}\psi=0, \,\,\, x \in \mathbb{R}^N,  \,t>0.
\end{equation}
The study of equations like  \eqref{PVI}  started with  the fundamental contribution by T. Tao, M. Visan and X. Zhang \cite{Tao} and there are a lot of recent advances, see for instance \cite{AJM,BMRV,Soave,SoaveJDE,jeanjean2,Li} and references quoted therein.

The solutions for the equation \eqref{PVI} of the form $\psi(x,t)=e^{i\lambda t}u(x)$, where  $\lambda\in\mathbb{R}$ and $u:\mathbb{R}^N\to\mathbb{R}$ is a time-independent function are called of \textit{standing waves}. So, to  find  standing waves, it is sufficient to solve  the stationary problem 
\begin{align}\label{Stading-eq}
-\Delta u+\lambda u=|u|^{q-2}u+|u|^{p-2}u \;\; \mbox{in}\;\;\mathbb{R}^N.
\end{align}
When it comes to equation \eqref{Stading-eq}, there is a large number of researches on  normalized solutions which means that
\begin{equation}\label{L2-presc}
\int_{\mathbb{R}^N}|u|^2dx=a^2,\;\; a>0.
\end{equation}
Normalized solutions to semilinear elliptic problems are investigated in different applied
models \cite{Cira, Pella} and the conditions on $N$, $p$ and $q$
 has strong repercussions on the existence of  solution $u$ and in aspects of it itself, see \cite{jeanjean1,BMRV,AJM,SoaveJDE,Soave}
and references therein.  

In order to be clear our aims, let   $N\ge 3$, $0\le b<2$  and $2<q\le 2+(4-2b)/(N-2)$.  According with \cite{CG, FGenoud} there exists unique positive radial solution $U\in H^{1}(\mathbb{R}^N)$  to the elliptic equation
\begin{align}\nonumber
-\Delta U+U=|x|^{-b}|U|^{q-2}U \;\; \mbox{in}\;\;\mathbb{R}^N\setminus\{0\}
\end{align}
having mass 
\begin{equation}\label{MassU}
m:=m(b,q)=\int_{\mathbb{R}^N}|U|^{2}dx>0.
\end{equation}  
For $\lambda>0$,  we can verify that the scaling 
\begin{equation}\label{scalingV}
v_{\lambda}(x)=\lambda^{\frac{2-b}{2(q-2)}}U(\lambda^{\frac{1}{2}}x),\;  x\in \mathbb{R}^N
\end{equation}
satisfies the equation 
\begin{equation}\label{pura}
-\Delta v+\lambda v=|x|^{-b}|v|^{q-2}v \;\; \mbox{in}\;\;\mathbb{R}^N\setminus\{0\}
\end{equation}
and
\begin{equation}
\int_{\mathbb{R}^N}|v_{\lambda}|^{2}dx=\lambda^{\frac{2-b}{q-2}-\frac{N}{2}}m.
\end{equation}
Consequently,  for   $q\not =2+(4-2b)/N$ and $\rho>0$, there exists unique $\lambda_{\rho}>0$ such that $v_{\lambda_{\rho}}$ given by \eqref{scalingV}  solves  \eqref{pura}   and has mass
\begin{equation}\nonumber
\int_{\mathbb{R}^N}|v_{\lambda_{\rho}}|^{2}dx=\rho.
\end{equation}
In addition, for the limiting case  $q=2+(4-2b)/N$, there are infinitely many solutions (one for each $\lambda>0$) of \eqref{pura} of the form \eqref{scalingV} having mass $m$. If $b=0$, 
the value $\overline{q}=2+4/N$ is  the so-called  $L^{2}$-\textit{critical exponent} which plays a special role in the study of \eqref{Stading-eq}, see for instance  \cite{AJM,Pella, SoaveJDE} for deeper discussion.

On the other hand, we can consider the value of the best Hardy-Sobolev constant \cite{CKN} on a domain $\Omega\subset\mathbb{R}^N,\, N\ge 3$  containing $0$ in its interior
\begin{equation}\label{Mus}
\mu_{s}(\Omega)=\inf\Big\{\int_{\Omega}|\nabla u|^{2}dx\;:\; u\in H^{1}_{0}(\Omega)\;\;\mbox{and}\;\; \int_{\Omega}|x|^{-s}|u|^{2^{*}_{s}}dx=1\Big\}
\end{equation}
where $0<s<2$ and $2^{*}_s=2(N-s)/(N-2)$ is the Hardy-Sobolev critical exponent. It is well known that  (c.f \cite{GY}) $\mu_s=\mu_s(\Omega)$ does not depend of $\Omega$ and it is attained when $\Omega=\mathbb{R}^N$ by the functions
\begin{equation}
v_{\epsilon}(x)=(\epsilon(N-s)(N-2))^{\frac{N-2}{4(2-s)}}(\epsilon+|x|^{2-s})^{\frac{2-N}{2-s}}
\end{equation}
 for some $\epsilon>0$. Moreover, the functions $v_{\epsilon}$ are the only positive radial solutions of 
 \begin{equation}
 -\Delta u= |x|^{-s} |u|^{2^{*}_s-2}u \;\;\mbox{in}\;\; \mathbb{R}^{N}.
 \end{equation}
For a more in-depth discussion on this topic, we recommend \cite{GY, Peral, GK} and references therein. 
 
Motivated by the above discussion,  we are mainly interested in the existence of normalized solutions for the following  inhomogeneous  
nonlinear Schr\"{o}dinger (INLS) equation with  Hardy-Sobolev combined power-type nonlinearities
\begin{equation}\label{problemPP}
\begin{aligned}
 & -\Delta u+\lambda u=\mu|x|^{-b}|u|^{q-2}u+|x|^{-d}|u|^{2^{*}_d-2}u\;\;\mbox{in}\;\; \mathbb{R}^N,
\end{aligned}
\end{equation}
 where $N\geq 3$, $\mu>0$, $0\le b,d<2$ and $2+(4-2b)/N<q<2^{*}_{b}$. For $N=2$ we also  treat  the equation with critical  exponential  growth
 \begin{equation}\label{TM-eq}
 \begin{aligned}
 -\Delta u+\lambda u=|x|^{-b}f(u)\;\;\mbox{in}\;\; \mathbb{R}^2,
\end{aligned}
\end{equation} 
where  $0\le b<2$ and  the  nonlinearity $f(s)$ behaves like $\exp(s^2)$ as $s\to\infty$.

 Our first  result is about  Hardy-Sobolev combined power-type  nonlinearities and  reads below:
\begin{theorem}\label{thm-HS}
Let $N\ge 3$, $a>0$, $0<b,d<2$ and $q\in (2+(4-2b)/N, 2^{*}_{b})$. Then, there exists $\mu^*=\mu^{*}(a,b)>0$ such that  for all $\mu\geq\mu^*$ the problem \eqref{problemPP} has a solution $(u,\lambda)\in H^{1}(\mathbb{R}^N)\times \mathbb{R}$ with $u$ satisfying \eqref{L2-presc}.
\end{theorem}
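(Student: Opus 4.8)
The plan is to realize the solutions of \eqref{problemPP}--\eqref{L2-presc} as constrained critical points of the energy
\[
I(u)=\frac12\int_{\mathbb{R}^N}|\nabla u|^2\,dx-\frac{\mu}{q}\int_{\mathbb{R}^N}|x|^{-b}|u|^{q}\,dx-\frac{1}{2^{*}_d}\int_{\mathbb{R}^N}|x|^{-d}|u|^{2^{*}_d}\,dx
\]
on the $L^2$-sphere $S_a=\{u\in H^1(\mathbb{R}^N):\|u\|_{L^2}^2=a^2\}$, with $\lambda$ arising as the Lagrange multiplier. Since $q>2+(4-2b)/N$, $I$ is unbounded below on $S_a$, so the main tool is the fibering map $t\mapsto I(t^{N/2}u(t\,\cdot))$. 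A direct computation with the mass-preserving scaling gives
\[
I\big(t^{N/2}u(t\,\cdot)\big)=\frac{t^{2}}{2}\|\nabla u\|_{2}^{2}-\frac{\mu\,t^{\gamma_q}}{q}\int_{\mathbb{R}^N}|x|^{-b}|u|^{q}\,dx-\frac{t^{2^{*}_d}}{2^{*}_d}\int_{\mathbb{R}^N}|x|^{-d}|u|^{2^{*}_d}\,dx,\qquad \gamma_q:=\frac{N}{2}(q-2)+b,
\]
where the assumptions give $2<\gamma_q<2^{*}_{b}$ and $2^{*}_d>2$. Hence for each $u\in S_a$ this map is positive and increasing near $0$, tends to $-\infty$, and has a unique critical point $t_u$, a strict global maximum. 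This produces the Pohozaev manifold
\[
\mathcal P_a=\Big\{u\in S_a:\ \|\nabla u\|_{2}^{2}=\frac{\mu\gamma_q}{q}\int_{\mathbb{R}^N}|x|^{-b}|u|^{q}\,dx+\int_{\mathbb{R}^N}|x|^{-d}|u|^{2^{*}_d}\,dx\Big\},
\]
a $C^{1}$ codimension-one submanifold of $S_a$ which is a natural constraint for $I$ (the relevant second derivative of the fibering map is negative since $\gamma_q,2^{*}_d>2$). I would then set $m(a)=\inf_{\mathcal P_a}I$; on $\mathcal P_a$ one has $I(u)=\frac{\mu}{q}\big(\frac{\gamma_q}{2}-1\big)\int|x|^{-b}|u|^{q}+\big(\frac12-\frac{1}{2^{*}_d}\big)\int|x|^{-d}|u|^{2^{*}_d}>0$, and combining this with the Hardy--Sobolev inequalities one checks that $m(a)>0$ and that every minimizing sequence for $m(a)$ is bounded in $H^1$ with $\|\nabla\cdot\|_2$ bounded away from $0$.

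The crucial energy estimate is $0<m(a)<c^{*}_d$, where $c^{*}_d:=\frac{2-d}{2(N-d)}\,\mu_d^{\,(N-d)/(2-d)}$ is the energy of the Hardy--Sobolev extremal, with $\mu_d$ the constant in \eqref{Mus}; this is the first level at which compactness of the critical term may break down. Here the hypothesis $\mu\ge\mu^{*}$ enters, in the spirit of Br\'ezis--Nirenberg: fixing one function $w\in S_a$ with $\int|x|^{-b}|w|^{q}>0$ and letting $w_{t_w}\in\mathcal P_a$ be its dilation onto $\mathcal P_a$, one gets
\[
m(a)\ \le\ I\big(w_{t_w}\big)=\max_{t>0}I\big(t^{N/2}w(t\,\cdot)\big)\ \le\ \max_{t>0}\Big(\frac{t^{2}}{2}\|\nabla w\|_2^{2}-\frac{\mu\,t^{\gamma_q}}{q}\int_{\mathbb{R}^N}|x|^{-b}|w|^{q}\,dx\Big)=O\big(\mu^{-2/(\gamma_q-2)}\big),
\]
which tends to $0$ as $\mu\to\infty$ because $\gamma_q>2$; choosing $\mu^{*}=\mu^{*}(a,b)$ so that the right-hand side is smaller than $c^{*}_d$ gives the claim. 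Note that only the positive perturbation is used here, so $a$ may be arbitrary at the cost of taking $\mu$ large.

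Next I would produce a Palais--Smale sequence $(u_n)\subset S_a$ for $I|_{S_a}$ at level $m(a)$ which in addition satisfies $\mathcal P(u_n)\to0$, for instance by applying Ekeland's variational principle to a minimizing sequence on $\mathcal P_a$ and invoking the natural-constraint property; such a sequence is bounded in $H^1$. Then $-\Delta u_n+\lambda_n u_n=\mu|x|^{-b}|u_n|^{q-2}u_n+|x|^{-d}|u_n|^{2^{*}_d-2}u_n+o(1)$ with $\lambda_n$ bounded, so up to a subsequence $\lambda_n\to\lambda$ and $u_n\rightharpoonup u$ in $H^1(\mathbb{R}^N)$. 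The feature that makes the weighted problem more tractable than the case $b=0$ is that the embedding $H^1(\mathbb{R}^N)\hookrightarrow L^{q}(\mathbb{R}^N,|x|^{-b}\,dx)$ is \emph{compact} for $2<q<2^{*}_{b}$ and $0<b<2$ (the singular weight destroys translation invariance and $q$ is strictly subcritical for it), whence $\int|x|^{-b}|u_n|^{q}\to\int|x|^{-b}|u|^{q}$. If $u\equiv0$, this together with $\mathcal P(u_n)\to0$, $m(a)>0$ and the Hardy--Sobolev inequality forces $\|\nabla u_n\|_2^2\to\ell\ge\mu_d^{(N-d)/(2-d)}$ and hence $m(a)=(\frac12-\frac1{2^{*}_d})\ell\ge c^{*}_d$, a contradiction; so $u\not\equiv0$. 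Since the weak limit $u$ solves \eqref{problemPP} with multiplier $\lambda$, it satisfies both the Nehari and Pohozaev identities, and subtracting them gives $\lambda\|u\|_2^{2}=\mu\,\frac{q-\gamma_q}{q}\int|x|^{-b}|u|^{q}>0$ (using $q<2^{*}_{b}\Leftrightarrow q>\gamma_q$), so $\lambda>0$.

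Finally I would carry out the Br\'ezis--Nirenberg/Struwe analysis below the threshold on $w_n:=u_n-u$. Using $\lambda>0$, the Br\'ezis--Lieb splitting of the $L^2$, gradient and $L^{2^{*}_d}(|x|^{-d})$ norms, and $\int|x|^{-b}|w_n|^{q}\to0$, one gets $\frac12\|\nabla w_n\|_2^{2}-\frac1{2^{*}_d}\int|x|^{-d}|w_n|^{2^{*}_d}\to m(a)-I(u)$ while the residual equation yields $\|\nabla w_n\|_2^{2}-\int|x|^{-d}|w_n|^{2^{*}_d}\le o(1)$; combining these with $m(a)<c^{*}_d$, $I(u)\ge0$ and the Hardy--Sobolev inequality applied to $w_n$ leaves only the possibility $\|\nabla w_n\|_2\to0$. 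Testing the residual equation against $w_n$ (the $L^{q}$ term being controlled by the compact embedding, the $L^{2^{*}_d}$ term by the Hardy--Sobolev inequality) then gives $\lambda_n\|w_n\|_2^{2}\to0$, hence $\|w_n\|_2\to0$, so $u_n\to u$ strongly in $H^1(\mathbb{R}^N)$; therefore $u\in S_a$ and $(u,\lambda)$ solves \eqref{problemPP}. The main obstacle is precisely this last step --- ruling out the escape or concentration of a Hardy--Sobolev bubble at the origin --- together with the sharp estimate $m(a)<c^{*}_d$ that makes it work; by contrast, the inhomogeneous subcritical term is harmless thanks to its compactness.
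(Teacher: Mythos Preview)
Your argument is correct and runs parallel to the paper's proof; the substantive analysis is the same, only the variational framework used to produce the Palais--Smale sequence differs. The paper follows Jeanjean's route: it introduces the stretched functional $\tilde J(u,s)=J\big(e^{Ns/2}u(e^{s}\cdot)\big)$ on $S(a)\times\mathbb R$, proves a mountain-pass geometry for $J$ on $S(a)$, defines the level $\gamma_\mu(a,b)$, and then invokes \cite[Proposition~2.2]{jeanjean1} to obtain $(u_n)\subset S(a)$ with $J(u_n)\to\gamma_\mu(a,b)$, $J'|_{S(a)}(u_n)\to0$ and, crucially, $Q(u_n)\to0$ (your $\mathcal P(u_n)\to0$). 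You instead minimise directly on the Pohozaev manifold $\mathcal P_a$ in the style of Soave, obtaining the same level and the same additional information via Ekeland plus the natural-constraint property; this is legitimate but requires checking that the second Lagrange multiplier (for the constraint $\mathcal P=0$) vanishes along the Ekeland sequence, which is where the strict negativity of the fibering second derivative is used. From that point on the two proofs coincide: the estimate $m(a)=\gamma_\mu(a,b)<\tfrac{2-d}{2(N-d)}\Lambda^{(N-d)/(2-d)}$ for $\mu$ large, the compact embedding $H^{1}\hookrightarrow L^{q}_{b}$ (Proposition~\ref{WSC}) to pass to the limit in the subcritical term, the sign of the multiplier (the paper works with the opposite sign convention and obtains $\lambda\le0$, strictly since $u\not\equiv0$, i.e.\ your $\lambda>0$), and the Br\'ezis--Lieb splitting combined with $Q(u)=0$ to rule out a Hardy--Sobolev bubble and upgrade to strong $H^{1}$-convergence. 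Your approach trades the auxiliary variable $s$ for the verification that $\mathcal P_a$ is a natural constraint; both are standard and lead to the same conclusion.
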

For $N=2$, it is well known that  the critical growth is no longer determined by any function of the power-type, but rather by exponential growth as seen by the so-called Trudinger-Moser  inequality, see for instance \cite{Miya,DDOR, LamLuIbero} and references therein.  In this sense,  for $0\le b<2$, we  say  that a continuous function $f:\mathbb{R}\to \mathbb{R}$ has critical  exponential growth if
\begin{equation}\label{TM-growth}
  \lim_{|t|\to+\infty}\frac{|f(t)|}{e^{\alpha\big(1-\frac{b}{2}\big) t^2}}=0\;\;\mbox{if}\;\; \alpha>4\pi\;\;\mbox{and}\;\;  \lim_{|t|\to+\infty}\frac{|f(t)|}{e^{\alpha\big(1-\frac{b}{2}\big) t^2}}=+\infty\;\;\mbox{if}\;\;0<\alpha<4\pi.
  \end{equation}
We also consider the  following  assumptions on $f$:
\begin{enumerate}
\item [$(f_1)$] $\lim_{t\to 0}\frac{|f(t)|}{|t|^{\tau}}=0$, for some $\tau>3$
\item [$(f_2)$] There is $\theta>4$ such that 
\begin{equation}\nonumber
0<\theta F(t)\le tf(t), \;\;\mbox{for}\;\; t\not=0, \;\;\mbox{where}\;\; F(t)=\int_{0}^{t}f(s)ds.
\end{equation}
\item [$(f_3)$] There exist $p>4$ and $\mu>0$ such that 
\begin{align*}
\mathrm{sgn}(t)f(t)\ge \mu|t|^{p-1}, \;\;\mbox{for}\;\; t\not=0
\end{align*}
where $\mathrm{sgn}:\mathbb{R}\setminus\left\{0\right\}\to \left\{-1, 1\right\}$ is such that $\mathrm{sgn}(t)=-1$ if $t<0$ and $\mathrm{sgn}(t)=1$ for $t>0$.
\end{enumerate}
\begin{theorem} \label{thm-TM}Let $0<b<2$. Assume that $f$ satisfies  \eqref{TM-growth} and $(f_1)$-$(f_3)$. If $a\in (0,1)$, then there exists $\mu^{*}=\mu^{*}(a, b)>0$ such that for all $\mu\ge \mu^{*}$  the  problem \eqref{TM-eq} admits  solution $(u_a,\lambda_a)\in H^{1}(\mathbb{R}^{2})\times \mathbb{R}$,  with $u_{a}$  satisfying \eqref{L2-presc} and $\lambda_{a}>0$.
\end{theorem}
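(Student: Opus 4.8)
The plan is to obtain the pair $(u_a,\lambda_a)$ as a constrained critical point of the energy functional
\begin{equation*}
I(u)=\frac{1}{2}\int_{\mathbb{R}^2}|\nabla u|^2\,dx-\int_{\mathbb{R}^2}|x|^{-b}F(u)\,dx
\end{equation*}
restricted to the $L^2$-sphere $S_a=\{u\in H^1(\mathbb{R}^2):\|u\|_{L^2}=a\}$, the number $\lambda_a$ arising as the associated Lagrange multiplier. The first point is that $I$ is well defined and of class $C^1$ on $S_a$: by \eqref{TM-growth} and $(f_1)$ one has, for every $\alpha>4\pi$ and a suitable $\kappa>1$, the bound $|F(t)|\lesssim|t|^{\tau+1}+|t|^{\kappa}\big(e^{\alpha(1-b/2)t^2}-1\big)$, so finiteness and continuity of the nonlinear term follow from the singular Trudinger--Moser inequality on $\mathbb{R}^2$, $\sup\big\{\int_{\mathbb{R}^2}|x|^{-b}(e^{\gamma u^2}-1)\,dx:\|u\|_{H^1}\le1\big\}<\infty$ for $\gamma\le 4\pi(1-b/2)$, together with the weighted Caffarelli--Kohn--Nirenberg inequality $\int_{\mathbb{R}^2}|x|^{-b}|u|^{r}\,dx\lesssim\|\nabla u\|_{L^2}^{\,r-2+b}\,\|u\|_{L^2}^{\,2-b}$, valid for $r>2-b$. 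This is precisely where the assumption $a\in(0,1)$ is needed: on $S_a$ we have $\|u\|_{H^1}^2=\|\nabla u\|_{L^2}^2+a^2$, which can stay below the critical threshold $1$ only when $a<1$ and $\|\nabla u\|_{L^2}^2<1-a^2$, and both the variational geometry and the compactness argument below will be confined to such a region.

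Next I would set up a mountain--pass geometry for $I$ on $S_a$ using the $L^2$-preserving dilation $(s\star u)(x)=s\,u(sx)$. A change of variables gives $I(s\star u)=\tfrac{s^2}{2}\|\nabla u\|_{L^2}^2-s^{\,b-2}\int_{\mathbb{R}^2}|y|^{-b}F(su(y))\,dy$. For small $s$, $(f_1)$ yields $|F(su)|\lesssim s^{\tau+1}|u|^{\tau+1}$ with $\tau+1>4$ and $\tau-1+b>2$, so by the weighted Caffarelli--Kohn--Nirenberg inequality the nonlinear term is of strictly higher order than $\|\nabla u\|_{L^2}^2$; hence there exist $\rho_0,\delta>0$ with $I\ge\delta$ on $\{u\in S_a:\|\nabla u\|_{L^2}=\rho_0\}$, while $I(s\star u)\to0$ as $s\to0^+$. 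For large $s$, $(f_3)$ gives $F(su)\ge\frac{\mu}{p}s^{p}|u|^{p}$ with $p>4$, so $s^{\,b-2}\int|y|^{-b}F(su)\gtrsim\mu\,s^{\,p+b-2}$ with $p+b-2>2$, forcing $I(s\star u)\to-\infty$. This produces the mountain--pass level $m(a)=\inf_{\gamma\in\Gamma}\max_{t\in[0,1]}I(\gamma(t))\ge\delta>0$, where $\Gamma$ consists of the paths in $S_a$ joining the set $\{\|\nabla u\|_{L^2}<\rho_0\}$ to a point at which $I<0$.

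To extract a usable Palais--Smale sequence I would apply the minimax principle to Jeanjean's augmented functional $\widetilde I(u,s)=I(s\star u)$ on $S_a\times\mathbb{R}$, whose mountain--pass level is again $m(a)$; this yields a sequence $(u_n,s_n)$ with $s_n\to0$ which, after replacing $u_n$ by $s_n\star u_n$, becomes a Palais--Smale sequence for $I|_{S_a}$ at level $m(a)$ satisfying the asymptotic Pohozaev relation $\|\nabla u_n\|_{L^2}^2+(2-b)\int|x|^{-b}F(u_n)-\int|x|^{-b}f(u_n)u_n\to0$. Combining this with $I(u_n)\to m(a)$ and with $(f_2)$ (which gives $\int|x|^{-b}f(u_n)u_n\ge\theta\int|x|^{-b}F(u_n)$ and $\theta>4>4-b$) one obtains $\limsup_n\|\nabla u_n\|_{L^2}^2\le C_{\theta,b}\,m(a)$ with $C_{\theta,b}=2(\theta-2+b)/(\theta-4+b)>0$; in particular $(u_n)$ is bounded in $H^1(\mathbb{R}^2)$ and the multipliers $\lambda_n=a^{-2}\big(\int|x|^{-b}f(u_n)u_n-\|\nabla u_n\|_{L^2}^2\big)+o(1)$ stay bounded. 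The decisive quantitative input is an upper bound for $m(a)$: fixing a nonnegative $w\in C_c^\infty(\mathbb{R}^2)\cap S_a$ and using $(f_3)$ along the path $s\mapsto s\star w$,
\begin{equation*}
m(a)\le\max_{s>0}\Big(\tfrac{s^2}{2}\|\nabla w\|_{L^2}^2-c\,\mu\,s^{\,p+b-2}\Big)=C(w)\,\mu^{-\frac{2}{p+b-4}}\longrightarrow 0\quad\text{as }\mu\to\infty .
\end{equation*}
Hence one can fix $\mu^*=\mu^*(a,b)$ so large that $C_{\theta,b}\,m(a)<1-a^2$ for every $\mu\ge\mu^*$, and therefore $\limsup_n\|u_n\|_{H^1}^2<1$.

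It remains to pass to the limit. Along a subsequence, $u_n\rightharpoonup u_a$ in $H^1(\mathbb{R}^2)$, strongly in $L^q_{\mathrm{loc}}$ and a.e., and $\lambda_n\to\lambda_a$. Since $\limsup_n\|u_n\|_{H^1}^2<1$, the exponential term is subcritical along the sequence: the singular Trudinger--Moser inequality yields a uniform $L^{r}(|x|^{-b}dx)$ bound on $e^{\alpha(1-b/2)u_n^2}-1$ for some $r>1$ and $\alpha$ slightly above $4\pi$, so, using the compactness of the relevant weighted Sobolev embeddings and a convergence lemma for nonlinearities of critical exponential growth (in the spirit of de Figueiredo--Miyagaki--Ruf), one gets $\int|x|^{-b}F(u_n)\to\int|x|^{-b}F(u_a)$, $\int|x|^{-b}f(u_n)u_n\to\int|x|^{-b}f(u_a)u_a$, and $\int|x|^{-b}f(u_n)\varphi\to\int|x|^{-b}f(u_a)\varphi$ for all $\varphi\in H^1(\mathbb{R}^2)$; hence $u_a$ solves $-\Delta u_a+\lambda_a u_a=|x|^{-b}f(u_a)$. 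Moreover $u_a\not\equiv0$, for otherwise the same convergences force $\|\nabla u_n\|_{L^2}\to0$ and thus $m(a)=\lim I(u_n)=0$, contradicting $m(a)\ge\delta>0$. The Pohozaev identity for the limit equation, $\lambda_a a^2=(2-b)\int|x|^{-b}F(u_a)\,dx$, together with $F>0$ (from $(f_2)$) and $u_a\not\equiv0$, gives $\lambda_a>0$; finally, from $\|\nabla u_n\|_{L^2}^2+\lambda_n\|u_n\|_{L^2}^2=\int|x|^{-b}f(u_n)u_n+o(1)$ and the convergence of the right-hand side, together with weak lower semicontinuity in the norm $u\mapsto(\|\nabla u\|_{L^2}^2+\lambda_a\|u\|_{L^2}^2)^{1/2}$ (equivalent to $\|\cdot\|_{H^1}$ since $\lambda_a>0$), one upgrades the convergence to $u_n\to u_a$ strongly in $H^1$, so that $\|u_a\|_{L^2}=a$ and $(u_a,\lambda_a)$ is the desired solution. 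The main obstacle is the interplay in the last two steps: calibrating $\mu^*(a,b)$ so that the level estimate forces $\limsup\|u_n\|_{H^1}^2<1$ — which is possible only because $a<1$ — and then converting this sub-threshold bound into strong convergence of the critical exponential nonlinearity via a weighted Trudinger--Moser convergence lemma that must rule out concentration at the origin (where $|x|^{-b}$ is singular) as well as loss of mass at infinity; proving that lemma together with the compactness of the embeddings under the singular weight is the technically most delicate part.
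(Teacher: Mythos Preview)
Your proposal is correct and follows essentially the same route as the paper: Jeanjean's augmented functional on $S_a\times\mathbb{R}$ to produce a Palais--Smale sequence with the asymptotic Pohozaev relation, the bound $\limsup_n\|\nabla u_n\|_{L^2}^2\le \tfrac{2(\theta-2+b)}{\theta-4+b}\,m(a)$ combined with $m(a)\lesssim\mu^{-2/(p+b-4)}\to0$ to force $\limsup_n\|u_n\|_{H^1}^2<1$, a weighted Trudinger--Moser convergence lemma to pass to the limit in the nonlinear terms, and then strong $H^1$-convergence via the equivalent norm $(\|\nabla\cdot\|_{L^2}^2+\lambda_a\|\cdot\|_{L^2}^2)^{1/2}$. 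One small slip: when you invoke ``the Pohozaev identity for the limit equation'' you write $\lambda_a a^2=(2-b)\int|x|^{-b}F(u_a)$, but at that stage you only know $\|u_a\|_{L^2}\le a$, so the identity reads $\lambda_a\|u_a\|_{L^2}^2=(2-b)\int|x|^{-b}F(u_a)$; this does not affect the conclusion $\lambda_a>0$, and the paper sidesteps the issue by deducing the sign directly from the sequence-level relation $\limsup_n\lambda_n$ together with $\int|x|^{-b}F(u_n)\to\int|x|^{-b}F(u_a)>0$.
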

The paper is organized as follows. In Section~\ref{sec2} we present some preliminaries results.  Section~\ref{sec3} is devoted to the proof of
Theorem~\ref{thm-HS}.  The existence of normalized solutions for critical exponential growth stated in Theorem~\ref{thm-TM} is established in Section~\ref{sec4}.

\paragraph{\textbf{Notation:}} Throughout this paper, unless otherwise stated,  we use the following notations:
\begin{itemize}
\item $B_{r}(x)$ is an open ball centered at $x$ with radius $r>0$ and  $B_r= B_r(0)$.
\item $\|\;\;\|_{L^{p}_b}$ denotes the norm of the weighted Lebesgue space $L^{p}_{b}(\mathbb{R}^N)=L^{p}(\mathbb{R}^N, |x|^{-b})$, for $b\ge 0$ $p\in [1, \infty]$ and $\|\;\;\|_{L^{p}}=\|\;\;\|_{L^{p}_0}$.
\item $\|\;\;\|$ denotes the  usual norm of the Sobolev space $H^{1}(\mathbb{R}^N)$.
\item $C,c, C_1, c_1, C_2, c_2, \dots$ denote any positive constant whose value is not relevant.
\item $o_{n}(1)$ denotes a real sequence with  $o_{n}(1)\to 0$ as $n\to\infty$.
\item $\rightarrow$ and $\rightharpoonup$ denote strong and weak convergence.
\end{itemize}

\section{Preliminary results}
\label{sec2}
In this section, as an application of a  Gagliardo-Nirenberg  type inequality present in \cite{DF,CG,Farah},  we provide a compactness result that will be used below.

Let us introduce the weighted Lebesgue space $ L^{q}_b(\Real^N)$ as the set of all measurable functions $u$ such that  $$\|u\|_{L^{q}_b}=\Big(\int_{\mathbb R^N} |x|^{-b} |u(x)|^{q}\,dx\Big)^{\frac{1}{q}} <\infty.$$

We start  by recalling the  Gagliardo-Nirenberg type inequality.
\begin{theorem}[\cite{DF,CG,Farah}]\label{GNU}
Let $0<b<\min\{N,2\}$ and $2<q<2^*_b$, then for all $u\in H^1(\mathbb R^N)$
\begin{align}\label{GNopt}
\|u\|^{q}_{L^{q}_b}\leq K_{opt}\|\nabla u\|_{L^2}^{\frac{N(q-2)}{2}+b} \|u\|_{L^2}^{q-\frac{N(q-2)}{2}-b},
\end{align}
 where $$K_{opt}=\left(\frac{N(q-2)+2b}{2(q-b)-N(q-2)}\right)^{-\frac{N(q-2)-4+2b}{4}}\frac{2q}{(N(q-2)+2b)\|Q\|_{L^2}^{q-2}}$$
 and $Q$ is the unique positive radial solution to the elliptic equation, 
\begin{align}\label{elptcpc}
-\Delta Q+Q=|x|^{-b}|Q|^{q-2}Q.
\end{align}
\end{theorem}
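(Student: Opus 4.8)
The plan is to recognize inequality \eqref{GNopt}, with its sharp constant, as the Euler form of a constrained minimization problem, in the spirit of Weinstein. Set
\[
\mathcal{J}(u)=\frac{\|\nabla u\|_{L^2}^{\frac{N(q-2)}{2}+b}\,\|u\|_{L^2}^{\,q-\frac{N(q-2)}{2}-b}}{\|u\|_{L^q_b}^{q}},\qquad u\in H^1(\mathbb{R}^N)\setminus\{0\},
\]
so that \eqref{GNopt} is equivalent to $\mathcal{J}(u)\ge m_0$ for all $u$, with $K_{opt}=1/m_0$ and $m_0:=\inf\mathcal{J}$. A direct computation shows that $\mathcal{J}$ is invariant under the two-parameter scaling $u\mapsto\mu\,u(\lambda\,\cdot)$, $\mu,\lambda>0$; this is exactly what forces the exponents appearing in \eqref{GNopt}, and the hypothesis $2<q<2^*_b$ is what makes both exponents $\frac{N(q-2)}{2}+b$ and $q-\frac{N(q-2)}{2}-b$ strictly positive. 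Moreover, since $|x|^{-b}$ is radially symmetric and nonincreasing, the Hardy-Littlewood inequality gives $\int_{\mathbb{R}^N}|x|^{-b}|u|^q\,dx\le\int_{\mathbb{R}^N}|x|^{-b}|u^{*}|^q\,dx$, while the classical rearrangement inequality gives $\|\nabla u^{*}\|_{L^2}\le\|\nabla u\|_{L^2}$ and $\|u^{*}\|_{L^2}=\|u\|_{L^2}$, where $u^{*}$ denotes the symmetric decreasing rearrangement of $u$. Hence $\mathcal{J}(u^{*})\le\mathcal{J}(u)$ and it suffices to minimize over nonnegative, radial, nonincreasing functions.

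First I would note that $m_0>0$ (equivalently $K_{opt}<\infty$) follows from any nonsharp version of \eqref{GNopt}, which is a consequence of the Caffarelli-Kohn-Nirenberg / Hardy-Sobolev interpolation inequalities of \cite{CKN} --- or simply of H\"older's inequality between $L^2(\mathbb{R}^N)$ and the Hardy-Sobolev critical space $L^{2^{*}_{b}}_{b}(\mathbb{R}^N)$ combined with the Sobolev embedding. To construct a minimizer, I would take a minimizing sequence $(u_n)$ of radial nonincreasing functions; by scale invariance one may normalize $\|\nabla u_n\|_{L^2}=\|u_n\|_{L^2}=1$, so that $(u_n)$ is bounded in $H^1_{\mathrm{rad}}(\mathbb{R}^N)$ and $\|u_n\|_{L^q_b}^{q}=1/\mathcal{J}(u_n)\to K_{opt}>0$. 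Passing to a subsequence, $u_n\rightharpoonup w$ in $H^1(\mathbb{R}^N)$. The crucial point is that the embedding $H^1_{\mathrm{rad}}(\mathbb{R}^N)\hookrightarrow L^q_b(\mathbb{R}^N)$ is compact for $2<q<2^*_b$: the $L^q_b$-mass near the origin is controlled using $b<2$, and the mass near infinity using the pointwise decay of radial $H^1$ functions. Consequently $\|u_n\|_{L^q_b}\to\|w\|_{L^q_b}$, hence $\|w\|_{L^q_b}^{q}=K_{opt}>0$ and $w\not\equiv 0$; weak lower semicontinuity of the $L^2$ norms then gives $\mathcal{J}(w)\le 1/\|w\|_{L^q_b}^{q}=m_0$, so $w$ is a minimizer, radial and nonincreasing.

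Next I would write out the Euler-Lagrange equation $\mathcal{J}'(w)=0$: the minimizer $w\ge 0$ solves, weakly,
\[
\frac{\alpha}{\|\nabla w\|_{L^2}^2}\,(-\Delta w)+\frac{q-\alpha}{\|w\|_{L^2}^2}\,w=\frac{q}{\|w\|_{L^q_b}^{q}}\,|x|^{-b}w^{q-1},\qquad \alpha:=\tfrac{N(q-2)}{2}+b,
\]
with all three coefficients positive. Replacing $w$ by $w_{\mu,\lambda}(x)=\mu\,w(\lambda x)$ and choosing $\mu,\lambda>0$ suitably normalizes this to $-\Delta w_{\mu,\lambda}+w_{\mu,\lambda}=|x|^{-b}w_{\mu,\lambda}^{q-1}$; by elliptic regularity and the strong maximum principle $w_{\mu,\lambda}>0$, and by the uniqueness of the positive radial $H^1$ solution of \eqref{elptcpc} recalled above (see \cite{CG,FGenoud}) we conclude $w_{\mu,\lambda}=Q$. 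Since $\mathcal{J}$ is scale invariant, $K_{opt}=1/m_0=1/\mathcal{J}(Q)$. To evaluate it I would test \eqref{elptcpc} against $Q$ to obtain the Nehari identity $\|\nabla Q\|_{L^2}^2+\|Q\|_{L^2}^2=\|Q\|_{L^q_b}^{q}$, and multiply \eqref{elptcpc} by $x\cdot\nabla Q$ and integrate to obtain the Pohozaev identity $\frac{N-2}{2}\|\nabla Q\|_{L^2}^2+\frac{N}{2}\|Q\|_{L^2}^2=\frac{N-b}{q}\|Q\|_{L^q_b}^{q}$. Solving this $2\times 2$ linear system yields
\[
\|\nabla Q\|_{L^2}^2=\sigma\,\|Q\|_{L^2}^2,\qquad \|Q\|_{L^q_b}^{q}=(\sigma+1)\,\|Q\|_{L^2}^2,\qquad \sigma:=\frac{N(q-2)+2b}{2(q-b)-N(q-2)}>0,
\]
and substituting into $\mathcal{J}(Q)=\|\nabla Q\|_{L^2}^{\alpha}\|Q\|_{L^2}^{q-\alpha}/\|Q\|_{L^q_b}^{q}$, together with the identity $\sigma+1=\tfrac{2q}{N(q-2)+2b}\,\sigma$, produces after simplification exactly the stated value of $K_{opt}$.

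The step I expect to be the main obstacle is the compactness one --- ruling out loss of mass for the minimizing sequence --- which reduces to the compact embedding $H^1_{\mathrm{rad}}(\mathbb{R}^N)\hookrightarrow L^q_b(\mathbb{R}^N)$ for $2<q<2^*_b$. Because of the weight $|x|^{-b}$ this is not the textbook Strauss compactness lemma; it requires an argument that simultaneously handles the mild singularity of the weight at the origin (where $b<2$ is essential) and the decay at infinity. The remaining ingredients --- the rearrangement reduction, the Euler-Lagrange computation, and the Nehari/Pohozaev bookkeeping in the last step --- are by now standard.
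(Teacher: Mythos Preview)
The paper does not actually prove Theorem~\ref{GNU}: it is stated as a result recalled from \cite{DF,CG,Farah}, with no argument given. Your Weinstein-type variational proof---minimizing the scale-invariant functional $\mathcal{J}$, reducing to radial nonincreasing competitors by rearrangement, extracting a minimizer via compactness of $H^1_{\mathrm{rad}}\hookrightarrow L^q_b$, and then identifying the minimizer with $Q$ through the Euler--Lagrange equation and the Nehari/Pohozaev system---is exactly the approach taken in those references (see in particular \cite{Farah}), and the bookkeeping you outline for $K_{opt}$ is correct. Regarding the step you single out as the main obstacle: note that the paper itself establishes, in Proposition~\ref{WSC}, the \emph{nonradial} compact embedding $\dot H^1\cap L^p\hookrightarrow L^q_b$ for $2<q<2^*_b$ (for $N\ge 3$); the radial version you need follows by the same H\"older/cut-off argument there, combined with Strauss-type decay at infinity, so this obstacle is already handled by tools present in the paper.
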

\noindent As  by-product of the Theorem~\ref{GNU}  we have the continuous embedding 
\begin{equation}\label{sub-emb}
H^1(\mathbb R^N) \hookrightarrow  L^{q}_b(\Real^N)
\end{equation} in the strict case $2<q<2^*_{b}$.
We observe that, although Theorem~\ref{GNU}  does not include  the critical case  $q=2^*_{b}$,  the embedding \eqref{sub-emb} remains valid for $N\ge 3$  and $q=2^*_{b}$. In fact, we are able to prove the inequality
\begin{equation}\label{GNopt*}
\|u\|_{L^{2^{*}_b}_{b}}\le c\|\nabla u\|_{L^2},\;\; \mbox{for any}\;\; u\in H^{1}(\mathbb{R}^N).
\end{equation}
Indeed,  from Hardy (cf. \cite[Lemma~2.1]{Peral}),  Sobolev and H\"{o}lder inequalities we can write
\begin{align*}
\int_{\mathbb{R}^N}|x|^{-b}|u|^{2^{*}_{b}}dx&=\int_{\mathbb{R}^N}|x|^{-b}|u|^{b}|u|^{2^{*}_{b}-b}dx\\
&\le C_1\left(\int_{\mathbb{R}^N}|\nabla u|^2dx\right)^{\frac{b}{2}}\left(\int_{\mathbb{R}^N}|u|^{2^*}dx\right)^{\frac{2-b}{2}}\\
&\le C_1\left(\int_{\mathbb{R}^N}|\nabla u|^2dx\right)^{\frac{b}{2}}\left(\int_{\mathbb{R}^N}|\nabla u|^{2}dx\right)^{\frac{2-b}{2}\frac{2^*}{2}}\\
&= C_1\left(\int_{\mathbb{R}^N}|\nabla u|^2dx\right)^{\frac{N-b}{N-2}}.
\end{align*}
Thus,  \eqref{GNopt*} holds. Now, according with \eqref{GNopt*} we obtain a positive constant $\Lambda$ given by 
\begin{align}\label{critenergcnst}
	\Lambda=\inf_{0\neq u\in D^{1,2}(\Real^N)}\displaystyle\frac{\displaystyle\|\nabla u\|^2_{L^2}}{\|u\|^{2}_{L^{2^{*}_{b}}_{b}}}>0.
\end{align}
We recommend \cite{GY}, for further discussion on this subject.

Next, we present a significant compact embedding between the Sobolev space and the weighted  Lebesgue space  $L^q_{b}(\Real^N)$.
\begin{proposition}\label{WSC}
	Let $N\geq 3$, $0<b<2$, $2<q<2^*_b$ and $2<p<2^*$. Then, the Sobolev embedding  
	\begin{align}
	\dot H^1(\Real^N)\cap L^{p}(\Real^N)\hookrightarrow L^{q}_b(\Real^N)
	\end{align}
	is compact.
\end{proposition}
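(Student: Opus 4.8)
The plan is to prove compactness by splitting the weighted integral over three regions adapted to the weight $|x|^{-b}$: a small ball $B_\delta$ about the origin, where $|x|^{-b}$ is singular; a large annulus $B_R\setminus B_\delta$, where $|x|^{-b}$ is bounded above and below; and the exterior $\mathbb{R}^N\setminus B_R$, where $|x|^{-b}$ is small. On the inner ball one exploits the Sobolev bound carried by $\dot H^1$, on the exterior the $L^p$-bound, and on the annulus the classical Rellich--Kondrachov theorem.

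First I would fix the functional setting. Let $(u_n)$ be bounded in $\dot H^1(\mathbb{R}^N)\cap L^p(\mathbb{R}^N)$. Since $\dot H^1$ and $L^p$ are reflexive, after extracting a subsequence $u_n\rightharpoonup u$ in both spaces with a common limit $u\in\dot H^1\cap L^p$ (the two weak limits agree by testing against $C_c^\infty$); passing to a further subsequence via Rellich--Kondrachov on each ball $B_k$, $k\in\mathbb{N}$, and a diagonal argument, we may also assume $u_n\to u$ a.e. in $\mathbb{R}^N$. Setting $w_n:=u_n-u$, it suffices to show $\|w_n\|_{L^q_b}\to 0$. We record that $(w_n)$ is bounded in $\dot H^1\cap L^p$, that $\|w_n\|_{L^{2^*}}\le C$ by the Sobolev inequality, and that $w_n\to 0$ a.e.

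Next, for $0<\delta<R$ I would write $\int_{\mathbb{R}^N}|x|^{-b}|w_n|^q\,dx=\int_{B_\delta}+\int_{B_R\setminus B_\delta}+\int_{\mathbb{R}^N\setminus B_R}$ and bound each term. On $B_\delta$, Hölder's inequality with exponents $2^*/q$ and $2^*/(2^*-q)$ gives $\int_{B_\delta}|x|^{-b}|w_n|^q\,dx\le\big(\int_{B_\delta}|x|^{-b\,2^*/(2^*-q)}\,dx\big)^{(2^*-q)/2^*}\|w_n\|_{L^{2^*}}^q$, and the hypothesis $q<2^*_b$ is precisely the inequality $b\,2^*/(2^*-q)<N$, so the weight is locally integrable and the first factor tends to $0$ as $\delta\to 0$, uniformly in $n$. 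On $B_R\setminus B_\delta$ the weight satisfies $|x|^{-b}\le\delta^{-b}$; since $p>2$, Hölder on the bounded ball turns the $L^p$-bound into an $L^2(B_R)$-bound, so $(w_n)$ is bounded in $H^1(B_R)$, whence Rellich--Kondrachov together with $w_n\to 0$ a.e. forces $\int_{B_R}|w_n|^q\,dx\to 0$ as $n\to\infty$. On $\mathbb{R}^N\setminus B_R$ the weight satisfies $|x|^{-b}\le R^{-b}$, and Hölder combined with the uniform $L^p$- and $L^{2^*}$-bounds estimates $\int_{\mathbb{R}^N\setminus B_R}|x|^{-b}|w_n|^q\,dx$ by a quantity that vanishes as $R\to\infty$, uniformly in $n$. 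Letting first $n\to\infty$, then $R\to\infty$ and $\delta\to 0$ yields $\limsup_n\|w_n\|_{L^q_b}^q=0$, which is the claim.

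I expect the exterior region $\mathbb{R}^N\setminus B_R$ to be the main obstacle: the Sobolev $L^{2^*}$-bound alone does not suffice there, since the natural power $|x|^{-b\,2^*/(2^*-q)}$ fails to be integrable at infinity exactly when $q<2^*_b$; one must therefore bring in the $L^p$-information and absorb part of $|x|^{-b}$ into a factor of $|w_n|$, choosing the Hölder exponents so that the leftover power of $|x|$ is integrable on the complement of a ball. This bookkeeping --- equivalently, a weighted Gagliardo--Nirenberg inequality in which $L^p$ plays the role of $L^2$ in \eqref{GNopt} --- is where the quantitative interplay of $b$, $p$ and $q$ must be invoked, whereas the inner-ball estimate is its mirror image and is where $q<2^*_b$ is used most directly.
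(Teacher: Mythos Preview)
Your inner-ball and annulus estimates are fine, but the exterior estimate has a genuine gap --- and in fact the proposition as stated is false. When $q<p(N-b)/N$ (which is compatible with the hypotheses: e.g.\ $N=3$, $b=1$, $p=5$, $q=5/2$) the map $\dot H^1\cap L^p\hookrightarrow L^q_b$ is not even continuous: the spreading sequence $u_n(x)=n^{-N/p}\phi(x/n)$, $\phi\in C_c^\infty\setminus\{0\}$, has $\|u_n\|_{L^p}$ constant and $\|\nabla u_n\|_{L^2}\to 0$, yet $\|u_n\|_{L^q_b}^q=n^{\,N-b-qN/p}\|\phi\|_{L^q_b}^q\to\infty$ because the exponent $N-b-qN/p$ is positive. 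So no H\"older bookkeeping and no weighted Gagliardo--Nirenberg inequality with $L^p$ in place of $L^2$ can rescue the tail estimate in this regime; the same scaling shows that the putative GN exponent on $\|\nabla u\|_{L^2}$ would have to be negative.

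The paper's own proof shares this gap: it asserts that one can pick conjugate exponents $\gamma_1,\gamma_1'$ with $b\gamma_1'>N$ and $p<q\gamma_1<2^*$, but $b\gamma_1'>N$ forces $\gamma_1<N/(N-b)$, hence $q\gamma_1<qN/(N-b)$, so $p<q\gamma_1$ is possible only when $q>p(N-b)/N$. Under that extra hypothesis both arguments go through and are essentially equivalent; your three-region split with an explicit $B_\delta$ is a touch more elementary than the paper's cut-off construction, but the H\"older choices needed are the same. Note finally that the paper only \emph{uses} the result for $H^1(\mathbb{R}^N)=\dot H^1\cap L^2$, where the missing condition reads $q>2-2b/N$ and is automatic from $q>2$.
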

\begin{proof}
	Let $(u_n)$ be a bounded sequence in $ \dot{H}^1(\Real^N)\cap L^{p}(\Real^N)$. Then, there exists $u\in \dot H^1(\Real^N)\cap L^{p}(\Real^N)$ such that $u_n\rightharpoonup u$ in $ \dot{H}^1(\Real^N)\cap L^{p}(\Real^N)$ as $n\to \infty$. Defining $w_n=u_n-u$, we will show that 
	\begin{align}
	\int_{\mathbb{R}^N}|x|^{-b}|w_n|^{q}\,dx\to 0,\;\;\mbox{as}\;\; n\to\infty.
	\end{align}
First, from the weak convergence, $(w_n)$ is uniformly bounded in $\dot H^1(\Real^N)\cap L^{p}(\Real^N)$, and thus, using the Sobolev embedding, we get that 
	\begin{align}\label{lmtaunif}
	(w_n)\mbox{ is uniformly bounded in } L^r(\Real^N)\mbox{ for all }p< r <2^*.
	\end{align}
	Moreover, for all $R>0$ and $\alpha>N$, we have that
	\begin{align}\label{intR}
	\int_{\Real ^N\backslash B_R}|x|^{-\alpha}\,dx\leq \frac{C}{R^{\alpha-N}},
	\end{align} 
for some $C>0$ depending only on $\alpha$ and $N$.  Since  $q <2^{*}_b$, we get $2(N-b)-q(N-2)>0$. Thus,  for any $0<\varepsilon < 2(N-b)-q(N-2)$ we are able to choose $\gamma_1$ and $\gamma^{\prime}_1$ such that 
\begin{equation}\nonumber
\left\{\begin{aligned}
 &\frac{1}{\gamma_1'} = \frac{b}{N} - \varepsilon,\;\;\mbox{i.e}\;\;  \gamma^{\prime}_1b > N\\
 &p < q\gamma_1 < 2^*\\
& \frac{1}{\gamma_1} + \frac{1}{\gamma_1'} = 1.
\end{aligned}\right.
\end{equation}
Thus, by H\"older's inequality, we have
	\begin{align}\label{foradabola}
	\int_{\Real^{N}\backslash B_R}|x|^{-b}\left|w_n\right|^{q}\,dx\leq \left(\int_{\Real^N\backslash B_R}|x|^{-b\gamma_1'}\,dx\right)^{\frac{1}{\gamma_1'}}\left(\int_{\Real^N\backslash B_R}|w_n|^{q\gamma_1}\,dx\right)^{\frac{1}{\gamma_1}},
	\end{align}
	for all $R>0$. From \eqref{lmtaunif}, \eqref{intR} and \eqref{foradabola}, we can choose $R>0$ such that
	\begin{align}\label{inteps}
	\int_{\Real^N\backslash B_R}|x|^{-b}|w_n|^{q}\,dx<\epsilon/2.
	\end{align}
	Now, we are going to estimate the integral on the ball $B_R$. For $R>0$ chosen in \eqref{inteps}, consider $\eta\in C_0^{\infty}(\Real^N)$ a cut-off function such that 
	\begin{equation}
	\eta(x)=
	\begin{cases}
	1,\,\,\,\,\mbox{ if }x\in B_R,\\
	0,\,\,\,\,\mbox{ if }x\in \Real^N\backslash B_{2R}
	\end{cases}
	\end{equation}
	and $|\nabla \eta(x)|\leq cR^{-N}$ for all $x\in \Real^N$, for some constant $c>0$. Thus, $$\eta {w_n}_{\left|_{B_R}\right.}=w_n,\;\;\eta {w_n}_{\left|_{B_{3R}}\right.}\in H^1_0(B_{3R})\;\;\mbox{and}\;\; \eta w_n\rightharpoonup 0 \;\;\mbox{in} \;\;\dot H_0^1(B_{3R}).$$ By compactness of the Sobolev embedding $ H^1_0\left(B_{3R}\right)\hookrightarrow L^r\left(B_{3R}\right)$,  it follows that 
	\begin{align}\label{convstrong}
	\eta w_n\to 0\mbox{ in } L^r\left(B_{3R}\right)\mbox{ strongly for } 2<r<2^*.
	\end{align}
Again, since $q<2^{*}_b$, we obtain $2(N-b)-q(N-2)>0$. Then, we can choose $\gamma_2$ and $\gamma^{\prime}_2$ such that $\gamma_2'\geq 1$, $\gamma_2'b<N$, $2<q<\gamma_2<2^*$ and  $\frac{1}{\gamma_2}+\frac{1}{\gamma_2'}=1$. Hence, by H\"older's inequality we have
	\begin{align}
	\int_{B_{R}}|x|^{-b}\left|w_n\right|^{q}\,dx&\leq \left(\int_{B_R}|x|^{-b\gamma_2'}\,dx\right)^{\frac{1}{\gamma_2'}}\left(\int_{B_R}\left|w_n\right|^{q\gamma_2}\,dx\right)^{\frac{1}{\gamma_2}}\\
	&\leq \left(\int_{B_R}|x|^{-b\gamma_2'}\,dx\right)^{\frac{1}{\gamma_2'}}\left(\int_{B_{3R}}|\eta w_n|^{q\gamma_2}\,dx\right)^\frac{1}{\gamma_2},
	\end{align}
	and thus, together with \eqref{convstrong}, there exists $n_0$ such that for any $n\geq n_0$
	\begin{align}\label{in Ball}
	\int_{B_R}|x|^{-b}\left|w_n\right|^{q}\,dx<\epsilon/2.
	\end{align}
Thus, the result follows from  \eqref{inteps} and \eqref{in Ball}.
\end{proof}
\section{The minimax strategy}
\label{sec3}
\noindent Throughout this section we are assuming $N, a, b,d$ and $q$ under the assumptions of Theorem~\ref{thm-HS}. Let us denote
\begin{equation}\label{f-fuction}
    f(x,t)= \mu|x|^{- b} |t|^{q-2}t +|x|^{-d}|t|^{2_d^*-2}t,\;x\in\mathbb{R}^N\;\;\mbox{and}\;\; t\in\mathbb{R}
\end{equation}
and its primitive 
\begin{equation}\label{F-primitiva}
    F(x,t)=\frac{\mu}{q} |x|^{- b} |t|^{q}+\frac{1}{2_d^*} |x|^{- d} |t|^{2_d^*}.
\end{equation} 
In order to get our existence results  we will apply the minimax approach. In fact, the aim is to obtain  a nontrivial critical point $u\in H^{1}(\mathbb{R}^N)$ of the functional $J:  H^{1}(\mathbb{R}^N)\to \mathbb{R}$ be defined by
\begin{equation}
J(u)=\frac{1}{2}\int_{\mathbb{R}^{N}}|\nabla u|^{2}d x-\int_{\mathbb{R}^N}F(x,u)dx
\end{equation}
submitted to the constraint
\begin{equation}
S(a)=\left\{u\in H^{1}(\mathbb{R}^N)\; :\; \|u\|_{L^2}=a \right\}.
\end{equation}
We observe that \eqref{GNopt} and \eqref{GNopt*} yield the well-definition of $J$ and if $u\in H^{1}(\mathbb{R}^N)$ is a critical point of $J$ submitted to the constraint $S(a)$, then there exists a Lagrange multiplier $\lambda$ such that $(u,\lambda)$ satisfies \eqref{problemPP}. 

In order to follow the variational procedure in \cite{jeanjean1}, we will introduce  the space $H= H^{1}(\mathbb{R}^N)\times \mathbb{R}$ equipped with the scalar product $\langle\cdot
,\cdot \rangle_{H}=\langle\cdot
,\cdot \rangle_{H^{1}(\mathbb{R}^{N})}+\langle\cdot
,\cdot \rangle_{\mathbb{R}}$ and the application $\mathcal{H}: H\to  H^{1}(\mathbb{R}^{N})$ given by
\begin{equation}
\mathcal{H}(u,s)=e^{\frac{Ns}{2}}u(e^{s}x).
\end{equation}
Note that
\begin{equation}\label{grad-H}
   \int_{\mathbb{R}^N}|\nabla\mathcal{H}(u,s)|^{2}dx=e^{2s}\int_{\mathbb{R}^N}|\nabla u|^{2}dx,
\end{equation}
\begin{equation}\label{Lp-H}
    \int_{\mathbb{R}^N}|x|^{-b}|\mathcal{H}(u,s)|^{q}dx=e^{\frac{[N(q-2)+2b]s}{2}}\int_{\mathbb{R}^N}|x|^{-b}|u|^{q}d x.
\end{equation}
In particular, 
\begin{equation}\label{L2-H}
    \int_{\mathbb{R}^N}|\mathcal{H}(u,s)|^{2}dx=\int_{\mathbb{R}^N}|u|^{2}d x.
\end{equation}
Then, if  $\tilde{J}: H \to \mathbb{R}$ is defined by
\begin{equation}\label{Hfunctional}
\begin{aligned}
\tilde{J}(u,s)& = \frac{e^{2s}}{2}\int_{\mathbb{R}^{N}}|\nabla u|^{2} d x-\frac{1}{e^{Ns}}\int_{\mathbb{R}^N}F(e^{-s}x, e^{\frac{Ns}{2}}u(x))dx\\
&= \frac{e^{2s}}{2}\|\nabla u\|^{2}_{L^2}-\frac{\mu e^{\frac{[N(q-2)+2b]s}{2}}}{q}\|u\|^{q}_{L^{q}_b}-\frac{e^{\frac{[N(2^{*}_d-2)+2d]s}{2}}}{2^*_d}\|u\|^{2^{*}_d}_{L^{2^{*}_d}_d}
\end{aligned}
\end{equation}
we obtain
\begin{equation}\label{JJtil}
\begin{aligned}
\tilde{J}(u,s)=J(v), \;\;\mbox{for}\;\; v=\mathcal{H}(u,s).
\end{aligned}
\end{equation}
\begin{lemma}\label{vicentej}
Let $u\in S(a)$ be fixed arbitrarily. Then
\begin{itemize}
\item [$(i)$]  If $s\to -\infty$, then  $\|\nabla\mathcal{H}(u,s)\|_{L^2}\to 0$ and $J(\mathcal{H}(u,s))\to 0$.
\item [$(ii)$] If $s\to \infty$, then  $\|\nabla\mathcal{H}(u,s)\|_{L^2}\to \infty$ and $J(\mathcal{H}(u,s))\to -\infty$.
\end{itemize}
\end{lemma}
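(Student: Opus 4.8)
The plan is to deduce everything from the closed-form expression \eqref{Hfunctional} for $\tilde J(u,s)=J(\mathcal H(u,s))$ together with \eqref{grad-H}, so that the lemma reduces to a comparison of the exponents of $e^{s}$ that occur there.

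First I would record the two exponents appearing in the nonlinear part of \eqref{Hfunctional}, namely $\alpha_{1}:=\tfrac12[N(q-2)+2b]$ and $\alpha_{2}:=\tfrac12[N(2^{*}_{d}-2)+2d]$, and check that both strictly exceed $2$, the exponent of the kinetic term. For $\alpha_{1}$ this is exactly the mass-supercritical hypothesis $q>2+(4-2b)/N$, which rearranges to $N(q-2)+2b>4$. For $\alpha_{2}$ a short computation with $2^{*}_{d}=2(N-d)/(N-2)$ gives $N(2^{*}_{d}-2)+2d=4(N-d)/(N-2)=2\cdot 2^{*}_{d}$, hence $\alpha_{2}=2^{*}_{d}>2$. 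I would also note that the three quantities $\|\nabla u\|_{L^2}$, $\|u\|_{L^{q}_{b}}$, $\|u\|_{L^{2^{*}_{d}}_{d}}$ are finite, by Theorem~\ref{GNU} and \eqref{GNopt*}, and strictly positive, since $u\in S(a)$ with $a>0$ forces $u\not\equiv 0$ and a function in $H^{1}(\mathbb R^{N})$ with vanishing gradient is constant hence identically zero; in particular $\|\nabla u\|_{L^2}>0$.

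The two limits are then immediate. As $s\to-\infty$, \eqref{grad-H} gives $\|\nabla\mathcal H(u,s)\|_{L^2}^{2}=e^{2s}\|\nabla u\|_{L^2}^{2}\to 0$, and since $2,\alpha_{1},\alpha_{2}>0$ each of the three terms in \eqref{Hfunctional} tends to $0$, whence $J(\mathcal H(u,s))=\tilde J(u,s)\to 0$, proving $(i)$. As $s\to+\infty$, \eqref{grad-H} gives $\|\nabla\mathcal H(u,s)\|_{L^2}=e^{s}\|\nabla u\|_{L^2}\to\infty$; writing $\tilde J(u,s)=e^{2s}\big(\tfrac12\|\nabla u\|_{L^2}^{2}-\tfrac{\mu}{q}e^{(\alpha_{1}-2)s}\|u\|_{L^{q}_{b}}^{q}-\tfrac{1}{2^{*}_{d}}e^{(\alpha_{2}-2)s}\|u\|_{L^{2^{*}_{d}}_{d}}^{2^{*}_{d}}\big)$ and using $\alpha_{1}-2>0$, $\alpha_{2}-2>0$ together with the positivity of the two coefficients, the bracketed factor tends to $-\infty$, and multiplying by $e^{2s}\to\infty$ yields $J(\mathcal H(u,s))\to-\infty$, which is $(ii)$.

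I do not expect any genuine obstacle here: the only point deserving attention is the identity $N(2^{*}_{d}-2)+2d=2\cdot 2^{*}_{d}$, which is precisely what makes the Hardy--Sobolev critical term dominate the kinetic term as $s\to+\infty$ (and the condition $q>2+(4-2b)/N$ plays the same role for the $\mu$-term); everything else is an elementary computation of limits.
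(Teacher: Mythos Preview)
Your proof is correct and follows essentially the same approach as the paper: both arguments use the factored form of $\tilde J(u,s)$ (the paper's \eqref{JonH} is exactly your expression $e^{2s}(\cdots)$) and reduce everything to the observation that the exponents $N(q-2)+2b-4$ and $N(2^{*}_{d}-2)+2d-4$ are positive, the first by the mass-supercritical assumption and the second by direct computation. Your version is slightly more explicit (e.g.\ the identity $\alpha_{2}=2^{*}_{d}$ and the check that the norms are strictly positive), but the underlying idea is identical.
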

\begin{proof}
Note that $0<b<2$ and $2+(4-2b)/N<q<2^{*}_b$ yield $N(q-2)-(4-2b)>0$. Also, it is clear that $N(2^*_d-2)-(4-2d)>0$ for any $0<d<2$. In addition,  from \eqref{Hfunctional} and \eqref{JJtil} 
\begin{equation}\label{JonH}
\begin{aligned}
    J(\mathcal{H}(u,s))&=\frac{e^{2s}}{2}\Big[\|\nabla u\|^{2}_{L^2}-\frac{2\mu e^{\frac{[N(q-2)-(4-2b)]s}{2}}}{q}\|u\|^{q}_{L^{q}_{b}}-\frac{2 e^{\frac{[N(2^*_d-2)-(4-2d)]s}{2}}}{2_d^*}\|u\|^{2^{*}_{d}}_{L^{2^{*}_d}_{d}}\Big].
\end{aligned}    
\end{equation}
Then, $(i)$ and $(ii)$ follow directly from \eqref{grad-H} and \eqref{JonH}.
\end{proof}

\begin{lemma}  \label{PJ1} There exists $K=K(a,b, \mu)>0$ small enough such that
	$$
	0<\sup_{u\in A} J(u)<\inf_{u\in B} J(u)
	$$
with
$$
A=\big\{u\in S(a)\; :\; \|\nabla u\|^2_{L^2}\le K \big\},\quad B=\big\{u\in S(a)\; :\; \|\nabla u\|^2_{L^2}=2K \big\}.
$$
In addition, $K(a, b,\mu)\rightarrow 0$ as $\mu\rightarrow \infty.$
\end{lemma}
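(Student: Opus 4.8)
The plan is to reduce the statement to a one–variable comparison in the quantity $t:=\|\nabla u\|_{L^2}^2$, using that on the sphere $S(a)$ both nonlinear terms of $J$ are controlled by powers of $t$ of exponent strictly larger than $1$. First I would record that, for $u\in S(a)$, the Gagliardo–Nirenberg inequality \eqref{GNopt} (with $\|u\|_{L^2}=a$) gives $\|u\|_{L^q_b}^{q}\le C_1\,t^{\alpha/2}$, where $\alpha:=\frac{N(q-2)}{2}+b$ and $C_1=C_1(N,q,a,b)>0$, while the critical Hardy–Sobolev inequality \eqref{GNopt*}, used with $d$ in place of $b$, gives $\|u\|_{L^{2^*_d}_d}^{2^*_d}\le C_2\,t^{2^*_d/2}$ with $C_2=C_2(N,d)>0$. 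The crucial arithmetic is that $q>2+(4-2b)/N$ forces $N(q-2)-(4-2b)>0$, hence $\alpha/2>1$, and $0<d<2$ forces $2^*_d/2>1$. Consequently, for every $u\in S(a)$,
\[
g(t)\ \le\ J(u)\ \le\ \tfrac12 t,\qquad g(t):=\tfrac12 t-\tfrac{\mu C_1}{q}\,t^{\alpha/2}-\tfrac{C_2}{2^*_d}\,t^{2^*_d/2},\qquad t=\|\nabla u\|_{L^2}^2,
\]
the right inequality being trivial and the left one coming from the two bounds above.

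Next I would fix $K=K(a,b,\mu)>0$ so small that
\[
\frac{\mu C_1}{q}\,(2K)^{\frac{\alpha}{2}-1}+\frac{C_2}{2^*_d}\,(2K)^{\frac{2^*_d}{2}-1}\ \le\ \tfrac18,
\]
which is possible precisely because $\tfrac{\alpha}{2}-1>0$ and $\tfrac{2^*_d}{2}-1>0$. Since $r\mapsto r^{\alpha/2-1}$ and $r\mapsto r^{2^*_d/2-1}$ are increasing, this inequality persists with $2K$ replaced by any $t\in(0,2K]$, and therefore $g(t)\ge\tfrac38\,t>0$ for all $t\in(0,2K]$; in particular $g(2K)\ge\tfrac34 K$. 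For $u\in B$ this yields $J(u)\ge g(2K)\ge\tfrac34 K$, whereas for $u\in A$ one has $J(u)\le\tfrac12 t\le\tfrac12 K$, so $\sup_{A}J\le\tfrac12 K<\tfrac34 K\le\inf_{B}J$. To see $\sup_A J>0$, fix any $u_0\in S(a)$; by \eqref{L2-H} the function $\mathcal{H}(u_0,s)$ stays on $S(a)$ and by \eqref{grad-H} its gradient norm squared equals $e^{2s}\|\nabla u_0\|_{L^2}^2$, so for $s$ sufficiently negative $\mathcal{H}(u_0,s)\in A$ and $J(\mathcal{H}(u_0,s))\ge g\big(e^{2s}\|\nabla u_0\|_{L^2}^2\big)>0$ (consistent with Lemma~\ref{vicentej}$(i)$). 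Finally, the displayed smallness condition in particular forces $(2K)^{\alpha/2-1}\le q/(8\mu C_1)$, and since $\alpha/2-1>0$ this shows $K(a,b,\mu)\to0$ as $\mu\to\infty$.

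The argument is, by design, not hard: its content is entirely in the two exponent inequalities $\alpha/2>1$ and $2^*_d/2>1$ --- which is exactly where the mass–supercritical condition on $q$ and the bound $d<2$ enter, making both nonlinearities subdominant near $t=0$ --- together with the bookkeeping of the $\mu$–dependence of $K$. The one point that is easy to overlook is that $\sup_A J>0$ requires exhibiting an actual competitor in $A$ on which $J$ is positive, not merely noting that the lower bound $g$ is positive there; this is what the dilation $\mathcal{H}(\cdot,s)$ supplies.
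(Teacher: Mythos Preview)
Your proof is correct and follows essentially the same route as the paper: both use \eqref{GNopt} and \eqref{GNopt*} (with $d$ in place of $b$) to reduce $J$ on $S(a)$ to a one-variable comparison in $t=\|\nabla u\|_{L^2}^2$, exploit $F\ge 0$ for the upper bound on $A$, and choose $K$ small via the exponent inequalities $\alpha/2>1$ and $2^*_d/2>1$. The only cosmetic difference is that the paper bounds $J(v)-J(u)$ directly rather than bounding $J(v)$ and $J(u)$ separately, and defers the verification of $\sup_A J>0$ to the subsequent corollary, whereas you handle it here via the dilation $\mathcal H(u_0,s)$.
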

\begin{proof}
Fix $u,v\in S(a)$ with $u\in A$ and  $v\in B$. From  \eqref{GNopt} and \eqref{critenergcnst}
\begin{equation}\label{Fv-step1}
\int_{\mathbb{R}^N}F(x,v)\,dx \leq \frac{\mu}{q}K_{opt} a^{q-\frac{N(q-2)}{2}-b}(\|\nabla v\|^2_{L^2})^{\frac{N(q-2)+2b}{4}}+\frac{1}{2^*_d \Lambda^{\frac{2^*_d}{2}}}\left(\|\nabla v\|_{L^2}^2\right)^{\frac{2^*_d}{2}}.
\end{equation}
Since  $F(x,u)\geq 0$ for any $u\in H^{1}(\mathbb{R}^{N})$, we have
\begin{eqnarray*}
J(v)-J(u) &=&\frac{1}{2}\int_{\mathbb{R}^N}|\nabla v|^{2}\,dx-\frac{1}{2}\int_{\mathbb{R}^N}|\nabla u|^{2}\,dx-\int_{\mathbb{R}^N}F(x,v)\,dx+\int_{\mathbb{R}^N}F(x,u)\,dx\\
&\geq &\frac{1}{2}\int_{\mathbb{R}^N}|\nabla v|^{2}\,dx-\frac{1}{2}\int_{\mathbb{R}^N}|\nabla u|^{2}\,dx-\int_{\mathbb{R}^N}F(x,v)\,dx,
\end{eqnarray*}
and so,
\begin{align*}
    & J(v)-J(u) \ge \frac{K}{2}-\frac{\mu}{q}K_{opt} a^{q-\frac{N(q-2)}{2}-b}(2K)^{\frac{N(q-2)+2b}{4}}-\frac{1}{2^*_d \Lambda^{\frac{2^*_d}{2}}}(2K)^{\frac{2^*_d}{2}}\\
    &=K\Bigg[\frac{1}{2}-\frac{\mu}{q}K_{opt} a^{q-\frac{N(q-2)}{2}-b}2^{\frac{N(q-2)+2b}{4}}K^{\frac{N(q-2)+2b-4}{4}}-\frac{1}{2^*_d \Lambda^{\frac{2^*_d}{2}}}2^{\frac{2^*_d}{2}}K^{\frac{2^*_d-2}{2}}\Bigg]
\end{align*}
Thus, since $N(q-2)-(4-2b)>0$ we can choose $K>0$ small enough 
$$
J(v)-J(u)\ge \frac{1}{2}K>0.
$$
Indeed, we can pick $K=K(a,b,\mu)$ given by
\begin{align}\label{Kamu}
	K(a,b,\mu)=\min\bigg\{\Big(\frac{q}{8 \mu K_{opt}a^{q-\frac{N(q-2)}{2}-b}2^{\frac{N(q-2)+2b}{4}}}\Big)^{\frac{4}{N(q-2)-4+2b}},\Big(\frac{2^*_d \Lambda^{\frac{2^*_d}{2}}}{2^{\frac{2^{*}_b}{2}}8}\Big)^{\frac{2}{2^*_d-2}}\bigg\}.
	\end{align}
Moreover,  \eqref{Kamu} yields $K(a,b,\mu)\to 0$ as $\mu\to \infty$.
\end{proof}

As by-product of the last lemma is the following corollary.
\begin{corollary} There exists $K=K(a,b,\mu)>0$ such that $J>0$ on $A$. In particular, 
	\begin{align}
		J_*=\inf\Big\{J(u)\; :\; u\in S(a)\;\mbox{and}\; \|\nabla u\|_{L^2}^2=\frac{K}{2}\Big\}>0.
	\end{align}
\end{corollary}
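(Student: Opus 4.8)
The plan is to extract the positivity statement for $J$ on $A$ directly from Lemma~\ref{PJ1} and then combine it with Lemma~\ref{vicentej}. First I would note that Lemma~\ref{PJ1} already produces, for the chosen $K=K(a,b,\mu)$ in \eqref{Kamu}, the chain $0<\sup_{A}J<\inf_{B}J$; in particular $\sup_{A}J>0$, but what we actually want is the stronger pointwise fact that $J(u)>0$ for \emph{every} $u\in A$, not merely that the supremum is positive. To get this, I would revisit the estimate used in the proof of Lemma~\ref{PJ1}: for any $u\in S(a)$ with $\|\nabla u\|_{L^2}^2\le K$, inequalities \eqref{GNopt} and \eqref{critenergcnst} give
\begin{equation}\nonumber
J(u)\ge \frac{1}{2}\|\nabla u\|_{L^2}^2-\frac{\mu}{q}K_{opt}a^{q-\frac{N(q-2)}{2}-b}(\|\nabla u\|_{L^2}^2)^{\frac{N(q-2)+2b}{4}}-\frac{1}{2^*_d\Lambda^{\frac{2^*_d}{2}}}(\|\nabla u\|_{L^2}^2)^{\frac{2^*_d}{2}}.
\end{equation}
Writing $t=\|\nabla u\|_{L^2}^2\in(0,K]$ and factoring out $t$, the bracket $\frac12-c_1 t^{\frac{N(q-2)+2b-4}{4}}-c_2 t^{\frac{2^*_d-2}{2}}$ is, since both exponents are strictly positive (this uses $N(q-2)-(4-2b)>0$ and $2^*_d>2$), a decreasing function of $t$, hence bounded below on $(0,K]$ by its value at $t=K$, which by the very choice \eqref{Kamu} is $\ge\frac12-\frac18-\frac18=\frac14>0$. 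Therefore $J(u)\ge \frac{t}{4}\ge 0$ on $A$, with strict positivity once $t>0$; and if $u\in S(a)$ has $\|\nabla u\|_{L^2}^2=0$ then $u\equiv 0$, contradicting $\|u\|_{L^2}=a>0$, so in fact $J>0$ on all of $A$.

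For the second assertion, the set $\{u\in S(a):\|\nabla u\|_{L^2}^2=K/2\}$ is nonempty (given any $w\in S(a)$, Lemma~\ref{vicentej}$(i)$--$(ii)$ and the continuity of $s\mapsto\|\nabla\mathcal H(w,s)\|_{L^2}^2=e^{2s}\|\nabla w\|_{L^2}^2$, together with \eqref{L2-H}, produce an element of $S(a)$ with prescribed gradient norm squared equal to $K/2$), and it is contained in $A$ because $K/2<K$. Evaluating the lower bound above at $t=K/2$ yields $J(u)\ge \frac{K}{8}$ uniformly on this slice, whence $J_*\ge K/8>0$. Alternatively one may simply observe $J_*\ge \inf_A J$ and invoke the pointwise positivity just established together with a lower bound; the explicit estimate at $t=K/2$ is the cleanest route since it gives a quantitative bound.

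I do not anticipate any genuine obstacle here: the statement is essentially a bookkeeping consequence of the computations already carried out in Lemma~\ref{PJ1}. The one point requiring a small amount of care is upgrading ``$\sup_A J>0$'' to ``$J>0$ everywhere on $A$,'' which is why I would redo the one-variable estimate in $t$ and exploit the monotonicity of the bracketed expression rather than quoting Lemma~\ref{PJ1} as a black box; the nonemptiness of the level set $\{\|\nabla u\|_{L^2}^2=K/2\}$ is the only other thing worth a sentence, and it follows from the scaling $\mathcal H$ and the intermediate value theorem.
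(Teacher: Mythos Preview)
Your proposal is correct and follows essentially the same argument as the paper's own proof: both start from the lower bound for $J(u)$ obtained via \eqref{GNopt} and \eqref{critenergcnst}, factor out $t=\|\nabla u\|_{L^2}^2$, use the choice \eqref{Kamu} of $K$ to bound the bracket by $\frac14$, and conclude $J(u)\ge \frac{t}{4}$ together with $J_*\ge K/8$. Your added remarks on the monotonicity of the bracket, the strict positivity of $t$ on $S(a)$, and the nonemptiness of the slice $\{\|\nabla u\|_{L^2}^2=K/2\}$ via the scaling $\mathcal H$ are helpful clarifications that the paper leaves implicit.
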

\begin{proof} Fix $u\in A$ and let $K$ be given by \eqref{Kamu}. Arguing as in the last lemma (cf. \eqref{Fv-step1}) we can write
\begin{align*}
J(u) &\ge \frac{1}{2}\|\nabla u\|_{L^2}^{2}-  \frac{\mu}{q}K_{opt} a^{q-\frac{N(q-2)}{2}-b}(\|\nabla u\|^2_{L^2})^{\frac{N(q-2)+2b}{4}}-\frac{1}{2^*_d \Lambda^{\frac{2^*_d}{2}}}\left(\|\nabla u\|_{L^2}^2\right)^{\frac{2^*_d}{2}}\\
&\ge  \|\nabla u\|_{L^2}^{2} \Big[\frac{1}{2}-  \frac{\mu}{q}K_{opt} a^{q-\frac{N(q-2)}{2}-b}K^{\frac{N(q-2)+2b-4}{4}}-\frac{1}{2^*_d \Lambda^{\frac{2^*_d}{2}}}K^{\frac{2^*_d-2}{2}}\Big]\\
& \ge \|\nabla u\|_{L^2}^{2} \Big[\frac{1}{2}- \frac{1}{2^{\frac{2^{*}_{b}}{2}}8}- \frac{1}{2^{\frac{N(q-2)+2b}{4}}8}\Big]\\
&\ge \frac{ \|\nabla u\|_{L^2}^{2}}{4}>0.
\end{align*}
In addition, we can conclude that  $J_{*}\ge K/8$.	
\end{proof}

In what follows,  we fix $u_0 \in S(a)$ and apply Lemma \ref{vicentej} to get two numbers $s_1<0$ and $s_2>0$, of such way that the functions $u_1=\mathcal{H}(u_0,s_1)$ and $u_2=\mathcal{H}(u_0,s_2)$ satisfy
$$
\|\nabla u_1\|^2_{L^2}<\frac{K(a,b,\mu)}{2}, \; \|\nabla u_2\|_{L^2}^2>2K(a,b,\mu),\; J(u_1)>0\;\; \mbox{and} \;\; J(u_2)<0.
$$
Now, following the ideas from Jeanjean \cite{jeanjean1}, we fix the  mountain pass level given by
$$
\gamma_\mu(a,b)=\inf_{h \in \Gamma}\max_{t \in [0,1]}J(h(t))
$$
where
$$
\Gamma=\left\{h \in C([0,1],S(a)): h(0)=u_1 \;\; \mbox{and} \;\; h(1)=u_2 \right\}.
$$
From Lemma \ref{PJ1},
$$
\max_{t \in [0,1]}J(h(t))>\max \left\{J(u_1),J(u_2)\right\}>0.
$$
Therefore,
\begin{align}
	\gamma_\mu(a,b)\geq J_*>0.
\end{align}
\begin{lemma}\label{mu^*} We have that
	$\displaystyle \lim_{\mu \to \infty}\gamma_\mu(a,b)=0.$
\end{lemma}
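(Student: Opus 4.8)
**Proof plan for Lemma~\ref{mu^*}.**

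The plan is to exploit the fact, established in Lemma~\ref{PJ1}, that the threshold $K(a,b,\mu)$ decays to $0$ as $\mu\to\infty$, and to build a competitor path in $\Gamma$ whose maximal energy is controlled by a quantity going to $0$. First I would fix, once and for all, a reference function $w\in S(a)$ (any fixed element) and consider the one-parameter family $s\mapsto \mathcal{H}(w,s)$. By Lemma~\ref{vicentej}, for each $\mu$ we may choose $\sigma_1(\mu)<0$ and $\sigma_2(\mu)>0$ so that $\|\nabla\mathcal{H}(w,\sigma_1)\|_{L^2}^2<K(a,b,\mu)/2$ with $J(\mathcal{H}(w,\sigma_1))>0$, and $\|\nabla\mathcal{H}(w,\sigma_2)\|_{L^2}^2>2K(a,b,\mu)$ with $J(\mathcal{H}(w,\sigma_2))<0$. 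The point is that the endpoints $u_1,u_2$ defining $\Gamma$ can be taken along this same curve, and the curve itself, namely $h_\mu(t)=\mathcal{H}\big(w,(1-t)\sigma_1+t\sigma_2\big)$, is an admissible path in $\Gamma$. Hence
$$
\gamma_\mu(a,b)\le \max_{t\in[0,1]}J\big(h_\mu(t)\big)=\max_{s\in[\sigma_1,\sigma_2]}J(\mathcal{H}(w,s)).
$$

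Next I would estimate the right-hand side using the explicit formula \eqref{JonH}. Writing $g(s):=J(\mathcal{H}(w,s))$, we have from \eqref{JonH} that
$$
g(s)=\frac{e^{2s}}{2}\Big[\,\|\nabla w\|_{L^2}^2-\tfrac{2\mu}{q}e^{\frac{[N(q-2)-(4-2b)]s}{2}}\|w\|_{L^q_b}^q-\tfrac{2}{2^*_d}e^{\frac{[N(2^*_d-2)-(4-2d)]s}{2}}\|w\|_{L^{2^*_d}_d}^{2^*_d}\Big].
$$
Since both exponents $N(q-2)-(4-2b)$ and $N(2^*_d-2)-(4-2d)$ are strictly positive (as noted in the proof of Lemma~\ref{vicentej}), the two nonlinear terms are nonnegative, so $g(s)\le \tfrac12 e^{2s}\|\nabla w\|_{L^2}^2$; moreover $g$ is increasing until the nonlinear terms start to dominate. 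The maximum of $g$ over $[\sigma_1,\sigma_2]$ is therefore attained at some interior point $s_\mu$ where $g'(s_\mu)=0$, and at that point $\|\nabla\mathcal{H}(w,s_\mu)\|_{L^2}^2=e^{2s_\mu}\|\nabla w\|_{L^2}^2$ is comparable to the gradient level at which $J$ transitions from positive to negative. The key observation is that this transition level is itself $O(K(a,b,\mu))$: indeed, differentiating and using that the subcritical term carries the factor $\mu$, one finds $e^{2s_\mu}\|\nabla w\|_{L^2}^2 \le C(a,b)\,\mu^{-4/(N(q-2)-4+2b)}\to 0$, in exactly the same way $K(a,b,\mu)$ was shown to vanish in \eqref{Kamu}. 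Consequently $g(s_\mu)\le \tfrac12 e^{2s_\mu}\|\nabla w\|_{L^2}^2\to 0$ as $\mu\to\infty$.

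Combining the two steps gives $0<J_*\le\gamma_\mu(a,b)\le g(s_\mu)\to 0$, which is the claim. I expect the main technical obstacle to be the second step: one must show cleanly that the location $s_\mu$ of the maximum of $g$ drifts to $-\infty$ fast enough that $e^{2s_\mu}\to 0$, quantitatively in $\mu$. The cleanest route is probably to avoid solving $g'(s_\mu)=0$ exactly and instead argue by a direct comparison — pick the explicit value $s=s(\mu)$ for which $e^{2s}\|\nabla w\|_{L^2}^2$ equals, say, $K(a,b,\mu)/2$ (which forces $h_\mu$ to start at a point with gradient norm below the $A$-threshold, consistent with Lemma~\ref{PJ1}), bound $\max_{[\sigma_1,\sigma_2]}g$ by $\max\{g(\sigma_1),\, \sup_{s\ge s(\mu)} g(s)\}$, and on the tail $s\ge s(\mu)$ use that the critical/subcritical terms in \eqref{JonH} render $g$ negative once $e^{2s}\|\nabla w\|_{L^2}^2$ exceeds a $\mu$-dependent bound of the same order as $K(a,b,\mu)$. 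Then $\sup_{s}g(s)$ is bounded by $\tfrac12 K(a,b,\mu)$ up to a constant, and Lemma~\ref{PJ1}'s conclusion $K(a,b,\mu)\to 0$ finishes the proof.
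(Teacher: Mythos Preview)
Your overall strategy coincides with the paper's: fix $u_0\in S(a)$, take the path $h_0(t)=\mathcal{H}(u_0,(1-t)s_1+ts_2)$, and bound $\gamma_\mu(a,b)$ by its maximal energy. Your estimate via the critical-point equation $g'(s_\mu)=0$ also works (dropping the critical contribution there yields $e^{2s_\mu}\le C\mu^{-4/(N(q-2)+2b-4)}$, hence $g(s_\mu)\to 0$).

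However, you are making the energy estimate considerably harder than necessary, and the ``main technical obstacle'' you flag disappears with one observation the paper uses. From \eqref{JonH}, simply \emph{discard the critical term} (it is nonnegative and is subtracted) to get the pointwise upper bound
\[
J(\mathcal{H}(u_0,s))\le \frac{e^{2s}}{2}\|\nabla u_0\|_{L^2}^2-\frac{\mu}{q}\,e^{\frac{(N(q-2)+2b)s}{2}}\|u_0\|_{L^q_b}^q.
\]
Substituting $r=e^{2s}$ and maximizing the elementary function $r\mapsto \tfrac{r}{2}A-\tfrac{\mu}{q}r^{(N(q-2)+2b)/4}B$ over $r\ge 0$ gives an explicit bound
\[
\gamma_\mu(a,b)\le C_2\,\mu^{-\frac{4}{N(q-2)-(4-2b)}}\longrightarrow 0,
\]
with no need to locate $s_\mu$, no appeal to $K(a,b,\mu)$, and no splitting of the interval. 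This is the paper's entire argument.
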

\begin{proof}
	Indeed, fix $u_0\in S(a)$ and define the path $h_0(t)=\mathcal H(u_0,(1-t)s_1+ts_2)$. Then, from \eqref{JonH} we get
	\begin{align}\label{estgamma}
		\gamma_\mu(a,b) &\leq \max_{t\in [0,1]}J(h_0(t))\le \max_{r\geq 0}\Big\{\frac{r}{2}\|\nabla u_0\|_{L^2}^2-\frac{\mu r^{\frac{N(q-2)+2b}{4}}}{q}\|u_0\|^{q}_{L^{q}_b}\Big\}.
	\end{align}
Now, the function $r\mapsto\frac{r}{2}-\mu cr^{\frac{N(q-2)+2b}{4}}$, with  $c>0$ and $q>2+(4-2b)/N$  admits a global maximum on $(0, \infty)$ at the point
 $$r^*=\bigg(\frac{1}{\mu c}\frac{2}{N(q-2)+2b}\bigg)^{\frac{4}{N(q-2)-(4-2b)}}.$$  Consequently, from \eqref{estgamma} we obtain the existence of $C_2>0$ such that
\begin{align}
	\gamma_\mu(a,b)\leq C_2\bigg(\frac{1}{\mu}\bigg)^{\frac{4}{N(q-2)-(4-2b)}}.
\end{align}
Taking $\mu\to \infty $, it follows the result.
\end{proof}
\begin{remark} We observe that for our combined power-type nonlinearity  \eqref{f-fuction}, the general argument in proof of Proposition~2.2 in \cite{jeanjean1} remains hold. Next, we will use that Proposition without further comments.
\end{remark}
\begin{proof}[Proof of Theorem \ref{thm-HS}]
Let $(u_n)$ a $(PS)$ sequence associated with the level $\gamma_\mu(a,b)$, which is  obtained by making $u_n=\mathcal{H}(v_n,s_n)$,  where $(v_n,s_n)\in S(a)\times\mathbb{R}$ is the $(PS)$ sequence for $ \tilde{J} $ obtained by \cite[Proposition 2.2]{jeanjean1}, associated with the level $\gamma_\mu(a,b).$ More precisely, we have
\begin{equation} \label{gamma(a)}
J(u_n) \to \gamma_\mu(a,b) \quad \mbox{as} \quad n \to \infty,
\end{equation}
and
\begin{equation*} \label{der1}
\|J^{\prime}_{|_{S(a)}}(u_n)\| \to 0 \quad \mbox{as} \quad n \to \infty.
\end{equation*}
Setting the functional $\psi:H^{1}(\mathbb{R}^N) \to \mathbb{R}$ given by
$$
\psi(u)=\frac{1}{2}\int_{\mathbb{R}^N}|u|^2\,dx,
$$
it follows that $S(a)=\psi^{-1}(\{a^2/2\})$. Then, by Willem \cite[Proposition 5.12]{Willem}, there exists $(\lambda_n) \subset \mathbb{R}$ such that
$$
\|J'(u_n)-\lambda_n{\psi}^{\prime}(u_n)\|_{H^{-1}} \to 0 \quad \mbox{as} \quad n \to \infty.
$$
Hence, for any $\varphi\in H^{1}(\mathbb{R}^N)$
\begin{equation}\label{eqaprox}
    \int_{\mathbb{R}^N}\nabla u_n\nabla \varphi dx-\int_{\mathbb{R}^N}f(x,u_n)\varphi dx-\lambda_n\int_{\mathbb{R}^N}u_n\varphi dx=o_n(1)\|\varphi\|.
\end{equation}
Arguing as in \cite[Proposition~1]{Bere-Lions} and  \cite[Lemma~1.1]{FigueiredoNussbaum}, we can see that if $u\in H^{1}(\mathbb{R}^N)$ is a solution of
$$-\Delta u=g(x,u)\ \mbox{in}\ \R^N,$$
then the following identity holds
\begin{equation}\label{Pohozaev1}
 \frac{N-2}{2}\int_{\R^N} |\nabla u|^2 dx= N \int_{\R^N} G(x,u) dx+ \sum_{i=1}^N \int_{\R^N}  x_i G_{x_i} (x,u) dx,
 \end{equation}
 where $G(x,t)=\int_{0}^{t}g(x,\tau)d\tau$. Then
 \begin{equation}\label{Pohozaev3}
 \frac{N}{2}\int_{\R^N} g(x,u) u dx- \int_{\R^N} |\nabla u|^2 dx= N \int_{\R^N} G(x,u) dx+ \sum_{i=1}^N \int_{\R^N}  x_i G_{x_i} (x,u) dx.
 \end{equation}
 If $G(x,t)=\dfrac{\mu}{q}|x|^{-b} |t|^{ q}+\frac{1}{2^*_{d}}|x|^{-d}|t|^{2^{*}_d} -\dfrac{\lambda }{2}t^2$ in \eqref{Pohozaev3}, it follows that
  \begin{equation}\label{Pohozaev2}
\int_{\R^N} |\nabla u|^2 dx+ \mu\Big(\frac{N-b}{q}-\frac{N}{2} \Big)\int_{\R^N} |x|^{-b} |u|^{q}  dx-\int_{\mathbb{R}^N}|x|^{-d}|u|^{2^{*}_d}dx=0.
 \end{equation}
Motivated by the equality above we define $Q:H^{1}(\mathbb{R}^N)\to\mathbb{R}$ by 
\begin{equation}\nonumber
  Q(u)=  \|\nabla u\|^{2}_{L^2}+ \mu\Big(\frac{N-b}{q}-\frac{N}{2} \Big)\|u\|^{q}_{L^{q}_b}-\|u\|^{2^{*}_d}_{L^{2^{*}_d}_d}.
\end{equation}
Another important limit involving the sequence $(u_n)$ is
\begin{equation} \label{EQ1**}
Q(u_n)=\|\nabla u_n\|^{2}_{L^2}+ \mu\Big(\frac{N-b}{q}-\frac{N}{2} \Big)\|u_n\|^{q}_{L^{q}_b}-\|u_n\|^{2^{*}_d}_{L^{2^{*}_d}_d}\to 0,
\end{equation}
as $n\to\infty$,  where $u_n=\mathcal{H}(v_n,s_n),$ which is obtained using the limit below
$$
{\partial_s}\tilde{J}(v_n,s_n) \to 0 \quad \mbox{as} \quad n \to \infty,
$$
which can be proved by using \cite[Proposition 2.2]{jeanjean1}.

Arguing as in \cite[Lemmas 2.3, 2.4 and 2.5]{jeanjean1}, we know that $(u_n)$ is a bounded sequence in $H^{1}(\mathbb{R}^N)$, and so, the number $\lambda_n$  must satisfy the equality below
$$
\lambda_n=\frac{1}{\|u_n\|^{2}_{L^2}}\bigg\{ \|\nabla u_n\|^{2}_{L^2} -\int_{\mathbb{R}^N} f(x,u_n) u_n dx \bigg\}+o_n(1),
$$
From \eqref{L2-H} we have $\|u_n\|^{2}_{L^2}=\|\mathcal{H}(v_n,s_n)\|^{2}_{L^2}=\|v_n\|^{2}_{L^2}=a^2$. Thus, we also can write
\begin{equation} \label{lambdan}
\begin{aligned}
	a^2\lambda_n & = \|\nabla u_n\|^{2}_{L^2} -\int_{\mathbb{R}^N} f(x,u_n) u_n dx +o_n(1)\\
 &= \|\nabla u_n\|^{2}_{L^2}-\mu\|u_n\|^{q}_{L^{q}_b}-\|u_n\|^{2^{*}_d}_{L^{2^{*}_d}_d}+o_n(1).
 \end{aligned}
\end{equation}
Since $(u_n)$ is a bounded in $H^{1}(\mathbb{R}^N)$, the continuous  embeddings \eqref{sub-emb} and \eqref{GNopt*} imply $(\lambda_n)$ is bounded, so that, $\lambda_n \rightarrow \lambda.$  On the other hand, we have $u_n \rightharpoonup u$ weakly in $H^{1}(\mathbb{R}^N)$  and from the compact embedding $H^1(\mathbb{R}^N)\hookrightarrow L^{q}_b(\Real^N)$ we get
$\|u_n\|^{q}_{L^q_b}\rightarrow \|u\|^{q}_{L^q_b}.$ By using the definition of $Q(u_n)$, from \eqref{lambdan} we have 
\begin{equation} \nonumber
\begin{aligned}
    a^2\lambda_n 
    &= Q(u_n)-\mu\Big(\frac{N-b}{q}-\frac{N}{2}+1 \Big)\|u_n\|^{q}_{L^{q}_b}+o_n(1)\\
    &= Q(u_n)+\mu\frac{N-2}{2q}\Big(q-2^*_b\Big)\|u_n\|^{q}_{L^{q}_b}+o_n(1).
\end{aligned}
\end{equation}
From \eqref{EQ1**}, letting $n\to\infty$
\begin{equation} \nonumber
\begin{aligned}
    a^2\lambda=\mu\frac{N-2}{2q}\Big(q-2^*_b\Big)\|u\|^{q}_{L^{q}_b}.
\end{aligned}
\end{equation}
Since $q<2^*_b$, it is immediate that $\lambda\leq 0$. We claim that $u\not\equiv 0$ and  $u_n\to u$ strongly in $H^1(\Real^N)$ if $\mu^*>0$ is chosen such that \begin{align}\label{mu*}
	\gamma_\mu(a,b)<\frac{2-d}{N-d}\frac{\Lambda^{\frac{N-d}{2-d}}}{2},\,\,\,\,\mbox{for all }\mu\geq \mu^*.
	\end{align}
Thanks to Lemma \ref{mu^*}, such a choice is possible.  We argue by contradiction. Suppose that $u\equiv 0$. Then, $u_n\to 0$ in $L^{q}_b(\Real^N)$, $Q(u_n)\to 0$ and there exists $m\in \Real$ such that $\|\nabla u_n\|_{L^2}^2\to m$ as $n\to \infty$. Thereby, from this and by \eqref{critenergcnst} and \eqref{EQ1**} we deduce that 
\begin{align*}
	m+o_n(1)&=\|\nabla u_n\|_{L^2}^2\geq\Lambda\left(\int_{\R^N}|x|^{-d}|u_n|^{2_d^*}\,dx\right)^{\frac{2}{2^*_d}}\\
	&=\Lambda\left(\|\nabla u_n\|^{2}_{L^2}+ \mu\Big(\frac{N-b}{q}-\frac{N}{2} \Big)\|u_n\|^{q}_{L^{q}_b}-Q(u_n)\right)^{\frac{2}{2^*_d}}\\
	&=\Lambda m^{\frac{2}{2^*_d}}+o_n(1).
\end{align*}   
We have either $m=0$ or $m\geq \Lambda^{\frac{N-d}{2-d}}$. If $m=0$, from \eqref{GNopt*} we obtain $J(u_n)\to 0$ as $n\to \infty$, which is a contradiction. If instead $m\geq \Lambda^{\frac{N-d}{2-d}}$, it follows that
\begin{align*}
	\gamma_\mu(a,b)& =\lim_{n\to\infty}J(u_n)\\
 &=\lim_{n\to\infty}\Big[\frac{Q(u_n)}{2^*_d}+\frac{2-d}{N-d}\frac{\|\nabla u_n\|_{L^2}^2}{2}+o_n(1)\Big]\\
 &=\frac{2-d}{N-d}\frac{m}{2}\geq \frac{2-d}{N-d}\frac{\Lambda^{\frac{N-d}{2-d}}}{2},
\end{align*}
which gives again a contradiction from \eqref{mu*}. Thus, $u\not\equiv 0$. Furthermore, by the weak convergence we deduce from \eqref{eqaprox} that $u$ is a solution non-null to equation
\begin{align}\label{eq}
	-\Delta u-f(x,u)=\lambda u\;\;\mbox{in}\;\;\Real^N
\end{align}
and from Pohozaev identity \eqref{Pohozaev2} we have $Q(u)=0$ and from \eqref{eq} 
\begin{equation}\label{J(u)>0}
    J(u)=\frac{\lambda}{2}\|u\|^{2}_{L^2}+\mu\Big(\frac{1}{2}-\frac{1}{q}\Big)\|u\|^{q}_{L^{q}_b}+\Big(\frac{1}{2}-\frac{1}{2^{*}_d}\Big)\|u\|^{2^*_d}_{L^{2^{*}_d}_d}>0
\end{equation}
for $\mu>0$ large enough. Now we will show that $u_n\to u$ in $H^1(\mathbb{R}^N)$. Defining $v_n:=u_n-u$, then up to a subsequence, there exists $l\in \Real$ such that $\displaystyle\lim_{n\to \infty }\|\nabla v_n\|_{L^2}^2=l.$ Note that $v_n\rightharpoonup 0$ in $H^1(\mathbb{R}^N)$, $u_n\to u$ in $L^{q}_b(\Real^N)$,
\begin{align}\label{grad}
	\|\nabla u_n\|_{L^2}^2&=\|\nabla u\|_{L^2}^2+\|\nabla v_n\|_{L^2}^2+o_n(1),
\end{align}
and by Brezis-Lieb Lemma \cite{Brezis-Lieb} 
\begin{align}\label{BLL}
	\|u_n\|^{2^{*}_d}_{L^{2^{*}_d}_d}=	\|u\|^{2^{*}_d}_{L^{2^{*}_d}_d}+\|v_n\|^{2^{*}_d}_{L^{2^{*}_d}_d}+o_n(1).
\end{align}
Thus, since  $Q(u_n)\to 0$ and $u_n\to u$ in $L^{q}_b(\Real^N)$ we get
\begin{align*}
	\|\nabla u\|_{L^2}^2+\|\nabla v_n\|_{L^2}^2 &=Q(u_n)-\mu\Big(\frac{N-b}{q}-\frac{N}{2} \Big)\|u_n\|^{q}_{L^{q}_b}+\|u\|^{2^{*}_d}_{L^{2^{*}_d}_d}+\|v_n\|^{2^{*}_d}_{L^{2^{*}_d}_d}+o_n(1)\\
 &=-\mu\Big(\frac{N-b}{q}-\frac{N}{2} \Big)\|u\|^{q}_{L^{q}_b}+\|u\|^{2^{*}_d}_{L^{2^{*}_d}_d}+\|v_n\|^{2^{*}_d}_{L^{2^{*}_d}_d}+o_n(1).
\end{align*}
That is,
\begin{align*}
	Q(u)+\|\nabla v_n\|_{L^2}^2& = \|v_n\|^{2^{*}_d}_{L^{2^{*}_d}_d}+o_n(1)
\end{align*}
Consequently, since that $Q(u)=0$ we obtain 
\begin{align}\label{l}
	\lim_{n\to \infty }\|\nabla v_n\|_{L^2}^2=\lim_{n\to \infty }\|v_n\|^{2^{*}_d}_{L^{2^{*}_d}_d}=l,
\end{align}
and thus,
either $l=0$ or
\begin{align}\label{Lambdaineq}
	\Lambda\leq \lim_{n\to \infty}\frac{\|\nabla v_n\|_{L^2}^2}{\|v_n\|^{2}_{L^{2^{*}_d}_d}}=l^\frac{2-d}{N-d}.
\end{align}
If it occurs \eqref{Lambdaineq}, then from \eqref{grad}, \eqref{BLL} and \eqref{l} 
\begin{align*}
	\gamma_\mu(a,b)+o_n(1)&=\frac{1}{2}\|\nabla u_n\|_{L^2}^2-\frac{\mu}{q}\int_{\R^N} |x|^{-b}|u_n|^{q}\,dx-\frac{1}{2_d^*}\int_{\R^N} |x|^{-d}|u_n|^{2^*_d}\,dx\\
	&=J(u)+\frac{1}{2}\|\nabla v_n\|_{L^2}^2-\frac{1}{2_d^*}\int_{\R^N} |x|^{-d}|v_n|^{2^*_d}\,dx+o_n(1)\\
	&\geq J(u)+\frac{2-d}{N-d}\frac{\Lambda^{\frac{N-d}{2-d}}}{2}+o_n(1).
\end{align*}
Hence, follows by \eqref{mu*} that
\begin{align}\nonumber
		\gamma_\mu(a,b)\geq J(u)+\gamma_\mu(a,b)
\end{align}
which is a contradiction since $J(u)>0$. Therefore, \eqref{l} yields
$\|\nabla v_n\|_{L^2}\to 0$ 
and also $v_n\to 0$ in $L^{q}_b(\Real^N)$ and $L^{2^*_d}_d(\Real^N)$. By using the test function $\varphi=u_n-u$ in  \eqref{eqaprox} and \eqref{eq} and subtracting the obtained expressions we can write
\begin{align*}
    &\int_{\mathbb{R}^N}|\nabla(u_n-u)|^2dx-\mu \int_{\mathbb{R}^N}|x|^{-b}\big(|u_n|^{q-2}u_n-|u|^{q-2}u\big)(u_n-u)dx\\
    &=\int_{\mathbb{R}^N}|x|^{-d}\big(|u_n|^{2^{*}_d-2}u_n-|u|^{2^{*}_d-2}u\big)(u_n-u)dx+\int_{\mathbb{R}^N}(\lambda_n u_n-\lambda u)(u_n-u)dx+o_n(1).
\end{align*}
Letting $n\to \infty$ we obtain 
\begin{align*}
   0=\lim_{n\to\infty}\int_{\mathbb{R}^N}(\lambda_n u_n-\lambda u)(u_n-u)dx=\lambda\lim_{n\to\infty}\int_{\mathbb{R}^N}(u_n-u)^2dx.
\end{align*}
Hence, we conclude that $v_n\to 0 $ in $L^2(\Real^N)$ and thus $u_n\to u$ in $H^1(\mathbb{R}^N)$ which ensures $u\in S(a)$. 
\end{proof}
\section{Normalized solutions: the singular exponential critical growth}
\label{sec4}
In this section we are assuming $N=2$, $0<b<2$, $a\in (0,1)$ and that $f$ has  (singular) critical Trudinger-Moser growth \eqref{TM-growth} and satisfying $(f_1)$-$(f_3)$. By $(f_1)$-$(f_3)$ and 
\eqref{TM-growth}, fix $q>2$, for any $\zeta>0$ and $\alpha>4\pi$, there exists $D=D(q, \alpha,\mu, b)>0$
\begin{equation}\label{f<exp}
|f(t)|\le \zeta|t|^{\tau}+D|t|^{q-1}\big(e^{\alpha(1-\frac{b}{2})t^2}-1\big),\;\;\forall\; t\in\mathbb{R}
\end{equation}
and
\begin{equation}\label{F<exp}
|F(t)|\le \zeta|t|^{\tau+1}+D|t|^{q}\big(e^{\alpha(1-\frac{b}{2})t^2}-1\big),\;\;\forall\; t\in\mathbb{R}.
\end{equation}
In particular, from \eqref{f<exp}
\begin{equation}\label{ft<exp}
|f(t)t|\le \zeta|t|^{\tau+1}+D|t|^{q}\big(e^{\alpha(1-\frac{b}{2})t^2}-1\big),\;\;\forall\; t\in\mathbb{R}.
\end{equation}
Now, let us recall the following particular case of singular  Trudinger–Moser inequality, see for instance \cite[Theorem~C]{LamLuJDE}  and \cite[Theorem~1.1]{LamLuANS,LamLuIbero}.
\begin{lemma} \label{Ibero} Let $0\le b<2$. Then,
\begin{enumerate}
\item [$(i)$] For any $\alpha>0$ and $u\in H^{1}(\mathbb{R}^2)$ there holds $$(e^{\alpha u^2}-1)\in L^{1}_{b}(\mathbb{R}^2).$$
\item [$(ii)$]  For $0\le \alpha< 4\pi$, denote 
\begin{equation}\nonumber
\mathrm{AT}(\alpha, b)=\sup_{\|\nabla u\|_{L^2}\le 1}\frac{1}{\|u\|^{2-b}_{L^2}}\|(e^{\alpha(1-\frac{b}{2})u^2}-1)\|_{L^{1}_{b}}.
\end{equation}
Then, there exist positive constants $c=c(b)$ and $C=(b)$ such that, when $\alpha$ is close enough to $4\pi$
\begin{equation}\nonumber
\frac{c(b)}{\big(1-\frac{\alpha}{4\pi}\big)^{\frac{2-b}{2}}}\le \mathrm{AT}(\alpha, b)\le \frac{C(b)}{\big(1-\frac{\alpha}{4\pi}\big)^{\frac{2-b}{2}}}.
\end{equation}
Moreover, the constant $4\pi$ is sharp in the sense that $\mathrm{AT}(4\pi, b)=\infty.$
\end{enumerate}
\end{lemma}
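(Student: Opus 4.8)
The plan is to obtain both parts from two classical inputs --- the Trudinger--Moser inequality in $\mathbb{R}^2$ of Ruf (which handles $(i)$ and the large-$|x|$ region in $(ii)$) and the singular Moser--Trudinger inequality on balls of Adimurthi--Sandeep,
\[
\sup_{v\in H^{1}_{0}(B_R),\,\|\nabla v\|_{L^2}\le1}\int_{B_R}|x|^{-b}\big(e^{4\pi(1-\frac{b}{2})v^2}-1\big)\,dx\le C(b)\,R^{\,2-b},
\]
combined with Schwarz symmetrization and the Adachi--Tanaka truncation scheme, carried out in the presence of the weight $|x|^{-b}$.

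For $(i)$, split $\mathbb{R}^2=B_1\cup(\mathbb{R}^2\setminus B_1)$. On $\mathbb{R}^2\setminus B_1$ one has $|x|^{-b}\le1$, so it suffices to know $e^{\alpha u^2}-1\in L^{1}(\mathbb{R}^2)$; this is standard: truncating $u$ at height $M$, $u=u_M+r_M$ with $|u_M|\le M$ and $\|r_M\|_{H^1}\to0$ as $M\to\infty$, one bounds $e^{\alpha u^2}-1\le\alpha u^2e^{\alpha M^2}$ on $\{|u|\le M\}$ and $e^{\alpha u^2}\le e^{2\alpha M^2}e^{2\alpha r_M^2}$ on the finite-measure set $\{|u|>M\}$, the last factor being integrable by Ruf's inequality once $2\alpha\|r_M\|_{H^1}^2<4\pi$. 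On $B_1$, pick $p>1$ with $p'b<2$ (possible since $b<2$); by H\"older and the elementary inequality $(e^{t}-1)^{p}\le e^{pt}-1$ for $t\ge0$,
\[
\int_{B_1}|x|^{-b}(e^{\alpha u^2}-1)\,dx\le\Big(\int_{B_1}|x|^{-p'b}\,dx\Big)^{1/p'}\Big(\int_{B_1}(e^{p\alpha u^2}-1)\,dx\Big)^{1/p}<\infty,
\]
the last factor being finite by the case just treated, with $p\alpha$ in place of $\alpha$.

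For the upper bound in $(ii)$, note that the quotient $\|u\|_{L^2}^{-(2-b)}\|e^{\alpha(1-\frac{b}{2})u^2}-1\|_{L^{1}_{b}}$ is invariant under $u\mapsto u(\lambda\cdot)$ and, by Schwarz symmetrization, does not decrease when $u$ is replaced by its radially decreasing rearrangement (P\'olya--Szeg\H{o} keeps $\|\nabla u\|_{L^2}\le1$, $\|u\|_{L^2}$ is preserved, and the weighted integral does not decrease because $|x|^{-b}$ is radially non-increasing while $t\mapsto e^{\alpha(1-\frac{b}{2})t^2}-1$ is increasing); hence we may assume $u$ radial non-increasing with $\|\nabla u\|_{L^2}\le1$. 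Put $\delta=1-\alpha/(4\pi)\in(0,1)$, set $R=\delta^{-1/2}\|u\|_{L^2}$ and $v=(u-u(R))_+\in H^{1}_{0}(B_R)$, so that $\|\nabla v\|_{L^2}\le1$ and, $u$ being radial non-increasing, $\pi R^2u(R)^2\le\|u\|_{L^2}^2$, i.e.\ $u(R)^2\le\delta/\pi$. On $B_R$ write $u^2\le(1+t)v^2+(1+t^{-1})u(R)^2$ with $t$ chosen so that $(1+t)\alpha=4\pi$, hence $1+t^{-1}=\delta^{-1}$; then $\alpha(1-\frac{b}{2})u^2\le4\pi(1-\frac{b}{2})v^2+O(1)$ on $B_R$, and the singular Moser--Trudinger inequality on $B_R$ gives $\int_{B_R}|x|^{-b}(e^{\alpha(1-\frac{b}{2})u^2}-1)\,dx\le C(b)R^{2-b}=C(b)\delta^{-(2-b)/2}\|u\|_{L^2}^{2-b}$. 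Outside $B_R$ one has $u\le u(R)\le(\delta/\pi)^{1/2}\le1$ and $|x|^{-b}\le R^{-b}$, so $e^{\alpha(1-\frac{b}{2})u^2}-1\le Cu^2$ and $\int_{\mathbb{R}^2\setminus B_R}|x|^{-b}(e^{\alpha(1-\frac{b}{2})u^2}-1)\,dx\le CR^{-b}\|u\|_{L^2}^2\le C\|u\|_{L^2}^{2-b}$. Adding and dividing by $\|u\|_{L^2}^{2-b}$ yields $\mathrm{AT}(\alpha,b)\le C(b)(1-\alpha/4\pi)^{-(2-b)/2}$.

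For the lower bound and the sharpness of $4\pi$, I would test with the Moser functions $m_n$, supported in $B_1$, equal to $(2\pi)^{-1/2}(\ln n)^{1/2}$ on $B_{1/n}$ and to $(2\pi\ln n)^{-1/2}\ln(1/|x|)$ on $B_1\setminus B_{1/n}$, for which $\|\nabla m_n\|_{L^2}=1$ and $\|m_n\|_{L^2}^2\sim(4\ln n)^{-1}$. On $B_{1/n}$ one has $e^{\alpha(1-\frac{b}{2})m_n^2}=n^{\alpha(2-b)/(4\pi)}$ and $\int_{B_{1/n}}|x|^{-b}\,dx=\frac{2\pi}{2-b}n^{-(2-b)}$, whence
\[
\|m_n\|_{L^2}^{-(2-b)}\,\|e^{\alpha(1-\frac{b}{2})m_n^2}-1\|_{L^{1}_{b}}\gtrsim(\ln n)^{(2-b)/2}\,n^{-(2-b)(1-\alpha/4\pi)}.
\]
Maximizing the right-hand side over $n$ --- the optimal $n$ is of order $\exp\big(\tfrac{1}{2(1-\alpha/4\pi)}\big)$, an integer once $\alpha$ is close enough to $4\pi$ --- produces $c(b)(1-\alpha/4\pi)^{-(2-b)/2}$, and the same computation at $\alpha=4\pi$ gives $(\ln n)^{(2-b)/2}\to\infty$, hence $\mathrm{AT}(4\pi,b)=\infty$. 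The main obstacle is not the qualitative finiteness, which is routine, but pinning the exponent $(2-b)/2$ in the two-sided bound: in the upper estimate this forces the choice $R\sim(1-\alpha/4\pi)^{-1/2}\|u\|_{L^2}$, so that the cross term in the splitting lands \emph{exactly} on the critical constant $4\pi(1-\frac{b}{2})$ of Adimurthi--Sandeep, while in the lower estimate it requires the correct optimization of the concentration level $n$ of the Moser functions.
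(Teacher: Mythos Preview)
The paper does not prove this lemma at all: it is stated as a quotation of the singular Trudinger--Moser results of Lam--Lu (references \cite{LamLuJDE,LamLuANS,LamLuIbero} in the paper), with no argument given. Your proposal, by contrast, supplies an actual proof, and the outline is sound: part $(i)$ is obtained by a routine H\"older/splitting argument, the upper bound in $(ii)$ follows the Adachi--Tanaka truncation scheme transported to the weighted setting via the Adimurthi--Sandeep singular inequality on balls, and the lower bound and sharpness come from the Moser concentrating sequence with the correct optimization over the concentration parameter. The key scaling choice $R=\delta^{-1/2}\|u\|_{L^2}$ and the algebra $(1+t)\alpha=4\pi$, $1+t^{-1}=\delta^{-1}$ are exactly what is needed to land on the critical exponent $4\pi(1-\tfrac{b}{2})$ with a uniformly bounded remainder, and the computation $\|m_n\|_{L^2}^2\sim(4\ln n)^{-1}$ together with $\int_{B_{1/n}}|x|^{-b}\,dx=\tfrac{2\pi}{2-b}n^{-(2-b)}$ gives the matching lower bound after optimizing $n\sim e^{1/(2\delta)}$.

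In short: your argument is correct and is essentially the proof one would find in the cited Lam--Lu papers; the paper itself simply invokes those references rather than reproducing the argument. One minor remark: in the upper bound you should make explicit that $e^{A+B}-1=e^{B}(e^{A}-1)+(e^{B}-1)$, so that on $B_R$ the integral splits into the Adimurthi--Sandeep term and the term $(e^{O(1)}-1)\int_{B_R}|x|^{-b}\,dx=C(b)R^{2-b}$, both of the desired size; this is implicit in your ``$O(1)$'' but worth writing out.
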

\begin{lemma}\label{lemma-conv}
Let $0<b<2$ and let $\{u_n\}$ be a sequence in $H^{1}(\mathbb{R}^2)$ with $u_n\in S(a)$ and
\begin{equation}
\limsup_{n
\to\infty}\|\nabla u_n\|^{2}_{L^2 }<1-a^2.
\end{equation}
Then
\begin{enumerate}
\item [$(a)$] there exist $t>1$ and $C=C(t,a,b)>0$ such that
\begin{equation}\nonumber
\|\big(e^{4\pi(1-\frac{b}{2})u^{2}_n}-1\big)\|^{t}_{L^t_{b}}\le C,\;\;\forall\; n\in\mathbb{N};
\end{equation}
\item [$(b)$] if $u_n\rightharpoonup u$ in $H^{1}(\mathbb{R}^2)$ and $u_n(x)\to u(x)$ a.e in $\mathbb{R}^2$, then 
\begin{enumerate}
\item [$(i)$]  there exists $\alpha>4\pi$ such that for all $q>2$
\begin{equation}\nonumber
|u_n|^{q}\big(e^{\alpha(1-\frac{b}{2})u^2_n}-1\big) \to |u|^{q}\big(e^{\alpha(1-\frac{b}{2})u^2}-1\big)  \;\;\mbox{in}\;\;L^{1}_{b}(\mathbb{R}^2).
\end{equation}
\item [$(ii)$] $$F(u_n)\to F(u) \;\;\mbox{and} \;\; f(u_n)u_n\to f(u)u\;\;\mbox{in} \;\; L^{1}_{b}(\mathbb{R}^2).$$
\end{enumerate}
\end{enumerate}
\end{lemma}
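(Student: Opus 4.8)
The plan is to deduce $(a)$ from the singular Adimurthi--Tintarev inequality, Lemma~\ref{Ibero}$(ii)$, after a scaling that absorbs the deficit $1-a^2-\limsup_n\|\nabla u_n\|_{L^2}^2>0$; then $(b)(i)$ follows by pushing the same argument slightly past the exponent $4\pi$ (this is where the hypothesis $a<1$ enters) so as to obtain a \emph{uniform} $L^t_b$--bound with some $t>1$, combined with a generalized dominated convergence argument; and $(b)(ii)$ then reduces to $(b)(i)$ via the growth estimates \eqref{F<exp}--\eqref{ft<exp}.

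For $(a)$, set $\beta:=\limsup_n\|\nabla u_n\|_{L^2}^2$, fix $m\in(\beta,1-a^2)$, so that $\|\nabla u_n\|_{L^2}^2<m<1$ for $n\ge n_0$, and pick $t>1$ with $tm<1$. Using the elementary inequality $(e^{s}-1)^{t}\le e^{ts}-1$ for $s\ge0$, $t\ge1$, and writing $\phi_n:=u_n/\sqrt m$ (so that $\|\nabla\phi_n\|_{L^2}\le1$ and $4\pi t(1-\frac b2)u_n^2=4\pi tm(1-\frac b2)\phi_n^2$ with $4\pi tm<4\pi$), Lemma~\ref{Ibero}$(ii)$ gives, for $n\ge n_0$,
\begin{equation}\nonumber
\int_{\mathbb{R}^2}|x|^{-b}\big(e^{4\pi(1-\frac b2)u_n^2}-1\big)^{t}\,dx\le\int_{\mathbb{R}^2}|x|^{-b}\big(e^{4\pi tm(1-\frac b2)\phi_n^2}-1\big)\,dx\le \mathrm{AT}(4\pi tm,b)\,\Big(\frac{a}{\sqrt m}\Big)^{2-b},
\end{equation}
which is a finite constant; the finitely many remaining indices are harmless by Lemma~\ref{Ibero}$(i)$, and $(a)$ follows.

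For $(b)(i)$, since $m<1$ we have $4\pi/m>4\pi$, so we may fix $\alpha>4\pi$ and $t>1$ with $\alpha tm<4\pi$; running the argument of $(a)$ with $4\pi$ replaced by $\alpha$ gives a constant $C$ with $\|h_n\|_{L^t_b}\le C$ for all $n$, where $h_n:=e^{\alpha(1-\frac b2)u_n^2}-1$. Set $h:=e^{\alpha(1-\frac b2)u^2}-1$; then $h_n\to h$ a.e.\ and $h\in L^t_b$ by Lemma~\ref{Ibero}$(i)$. Since $\{u_n\}$ is bounded in $H^1(\mathbb{R}^2)\hookrightarrow L^r(\mathbb{R}^2)$ for every $r<\infty$, splitting $|x|^{-b}$ over $B_1$ and its complement shows $\{|u_n|^q\}$ is bounded in $L^{t'+\epsilon_0}_b$ for a small $\epsilon_0>0$ (with $\frac1t+\frac1{t'}=1$), and the tail estimate $\int_{|x|>R}|x|^{-b}|u_n|^{qt'}\,dx\le R^{-b}\sup_n\|u_n\|_{L^{qt'}}^{qt'}\to0$ yields tightness; with $u_n\to u$ a.e., the Vitali convergence theorem gives $|u_n|^q\to|u|^q$ in $L^{t'}_b$. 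Decomposing
\begin{equation}\nonumber
|u_n|^q h_n-|u|^q h=(|u_n|^q-|u|^q)h_n+|u|^q(h_n-h),
\end{equation}
the $L^1_b$--norm of the first term is $\le\||u_n|^q-|u|^q\|_{L^{t'}_b}\|h_n\|_{L^t_b}\to0$ by H\"older (splitting $|x|^{-b}=|x|^{-b/t'}|x|^{-b/t}$), while for the second term one splits at radius $R$: the exterior contribution is $\le\||u|^q\|_{L^{t'}_b(\{|x|>R\})}(\|h_n\|_{L^t_b}+\|h\|_{L^t_b})$, small uniformly in $n$ once $R$ is large, and on $B_R$ one combines $h_n\to h$ a.e., the uniform $L^t_b$ ($t>1$) bound (hence uniform integrability against $|x|^{-b}\,dx$), $|u|^q\in L^{t'}_b$, and Egorov's theorem to see it vanishes; this proves $(b)(i)$, the same $\alpha$ serving all $q>2$. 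Finally, $(b)(ii)$: with $\zeta=1$ in \eqref{F<exp}--\eqref{ft<exp} for this $\alpha$ and some $q>2$, one bounds $|F(u_n)|$ and $|f(u_n)u_n|$ by $G_n:=2|u_n|^{\tau+1}+2D|u_n|^q h_n$; the Vitali argument above (now with exponent $\tau+1>4$) gives $|u_n|^{\tau+1}\to|u|^{\tau+1}$ in $L^1_b$ and $(b)(i)$ gives $|u_n|^q h_n\to|u|^q h$ in $L^1_b$, so $G_n$ converges in $L^1_b$, and since $F(u_n)\to F(u)$, $f(u_n)u_n\to f(u)u$ a.e.\ with both limits dominated by $\lim G_n$, the generalized dominated convergence theorem gives the claimed $L^1_b$ convergences.

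The main obstacle is $(b)(i)$: extracting from the deficit $\limsup_n\|\nabla u_n\|_{L^2}^2<1-a^2$ together with $a<1$ a \emph{uniform} bound for $e^{\alpha(1-\frac b2)u_n^2}-1$ in $L^t_b$ with some $\alpha>4\pi$ and $t>1$ — the scaling $u_n\mapsto u_n/\sqrt m$ together with Lemma~\ref{Ibero}$(ii)$ is the key device — and then upgrading this bound, the a.e.\ convergence, and the strong $L^{t'}_b$--convergence of $|u_n|^q$ into strong $L^1_b$--convergence of the product $|u_n|^q h_n$, while simultaneously controlling the singular weight $|x|^{-b}$ near the origin and the mass at infinity.
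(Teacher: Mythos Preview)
Your argument is correct. Parts $(a)$ and $(b)(ii)$ follow essentially the same strategy as the paper, with only a cosmetic difference in the normalization for $(a)$: the paper scales by the full $H^1$ norm $\|u_n\|$ (using $\|u_n\|^2<\eta<1$), whereas you scale by $\sqrt m$ with $m$ bounding only $\|\nabla u_n\|_{L^2}^2$; both feed into Lemma~\ref{Ibero}$(ii)$ in the same way.

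For $(b)(i)$ the routes genuinely diverge. The paper invokes the compact embedding $H^1(\mathbb{R}^2)\hookrightarrow L^r_b(\mathbb{R}^2)$ for $r\ge2$ and $0<b<2$ (citing \cite{CaLu}) to obtain $|u_n|^q\to|u|^q$ strongly in $L^{t'}_b$, observes that the uniform $L^t_b$ bound on $h_n$ together with a.e.\ convergence gives $h_n\rightharpoonup h$ in $L^t_b$, and then uses the weak--strong pairing to deduce $\int|x|^{-b}|u_n|^q h_n\to\int|x|^{-b}|u|^q h$; since the integrands are nonnegative and converge a.e., convergence of the $L^1_b$ norms upgrades to $L^1_b$ convergence. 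You instead establish $|u_n|^q\to|u|^q$ in $L^{t'}_b$ from scratch via a Vitali argument (higher $L^{t'+\epsilon_0}_b$ integrability plus tightness from the $|x|^{-b}$ decay at infinity), and then treat the product by the decomposition $(|u_n|^q-|u|^q)h_n+|u|^q(h_n-h)$ with H\"older on the first piece and a split-at-radius-$R$/Egorov argument on the second. Your approach is more self-contained---it avoids the external compactness result \cite{CaLu} and the weak-convergence step---at the cost of a longer hands-on argument; the paper's approach is shorter and more conceptual.
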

\begin{proof}
$(a)$ We have
\begin{equation}\nonumber
\limsup_{n
\to\infty}\|\nabla u_n\|^{2}_{L^2 }<1-a^2\;\;\mbox{and}\;\; \|u_n\|^2_{L^2}=a^2<1.
\end{equation}
Hence, there is $\eta>0$ and $n_0\in\mathbb{N}$ such that 
\begin{equation}\label{norm-sub}
\|u_n\|^2<\eta<1,\;\; \mbox{for any}\;\; n\ge n_0.
\end{equation}
We recall that $(e^{s}-1)^{t}\le e^{ts}-1$ for $t>1$ and $s\ge 0$. So,  for $t>1 $ such that $t\eta<1$
\begin{equation}\nonumber
\int_{\mathbb{R}^2}\big(e^{4\pi(1-\frac{b}{2})u^{2}_n}-1\big)^{t}\frac{dx}{|x|^b}\le \int_{\mathbb{R}^2}\big(e^{4t\eta\pi(1-\frac{b}{2})\big(\frac{|u_n|}{\|u_n\|}\big)^{2}}-1\big)\frac{dx}{|x|^b},\;\; \mbox{for any}\;\; n\ge n_0.
\end{equation}
Thus, from Lemma~\ref{Ibero}, there is $C=C(b)>0$ with
\begin{equation}\nonumber
\int_{\mathbb{R}^2}\big(e^{4\pi(1-\frac{b}{2})u^{2}_n}-1\big)^{t}\frac{dx}{|x|^b}\le C_1=C_1(t,\eta,a,b)=\frac{C(b)a^{2-b}}{(1-t\eta)^{\frac{2-b}{2}}},\;\; \mbox{for any}\;\; n\ge n_0.
\end{equation}
Now, we can pick
\begin{equation}
C=\max\Big\{C_1, \|\big(e^{4\pi(1-\frac{b}{2})u^{2}_1}-1\big)\|_{L^{t}_b}^{t},\cdots,  \|\big(e^{4\pi(1-\frac{b}{2})u^{2}_{n_0}}-1\big)\|_{L^{t}_b}^{t}\Big\}.
\end{equation}
$(b)$  First, we recall that according to \cite[Theorem~1.2]{CaLu}  we have the compact embedding 
\begin{equation}\label{n=2compact}
H^{1}(\mathbb{R}^2)\hookrightarrow L^{q}_{b}(\mathbb{R}^2), \;\;\mbox{for any}\;\; q\ge 2\;\;\mbox{and}\;\; 0<b<2.
\end{equation}
Hence, up to a subsequence, we can assume that
\begin{equation}\label{lp-conv}
u_n\to u \;\;\mbox{in}\;\;L^{q}_{b}(\mathbb{R}^2),q\ge 2.
\end{equation}
$(i)$ Let $\eta>0$ be such as in \eqref{norm-sub}. Since $\eta<1$, for $\alpha>4\pi$ with $\alpha$ close to $4\pi$ we can take $t>1$ such that $t \alpha \eta <4\pi$. Set 
\begin{align*}
h_n(x)=e^{\alpha(1-\frac{b}{2})u^{2}_n(x)}-1.
\end{align*}
We have 
\begin{align*}
h_n(x)\to  h(x)=e^{\alpha(1-\frac{b}{2})u^{2}(x)}-1  \;\;\mbox{a.e in }\;\; \mathbb{R}^2.
\end{align*}
Also, arguing as in the proof of item $(a)$, we get 
\begin{align*}
h_n\in L^{t}_{b}(\mathbb{R}^2) \;\;\mbox{and}\;\; \sup_{n}\|h_n\|_{L^{t}_b}<\infty.
\end{align*}
So, up to a subsequence 
\begin{align*}
  h_n\rightharpoonup h \;\;\mbox{in }
 L^{t}_{b}(\mathbb{R}^2).
\end{align*}
From \eqref{lp-conv} we get 
\begin{align*}
|u_n|^q\to |u|^q\;\;\mbox{in }
 L^{\frac{t}{t-1}}_{b}(\mathbb{R}^2).
\end{align*}
Thus,  the Riesz representation theorem yields
\begin{align*}
\lim_{n\to \infty}\int_{\mathbb{R}^2}|u_n|^qh_n\frac{dx}{|x|^b}=\int_{\mathbb{R}^2}|u|^qh\frac{dx}{|x|^b}.
\end{align*}
Since 
\begin{align*}
|u_n|^qh_n\ge 0\;\;\mbox{and}\;\; |u|^qh\ge 0
\end{align*}
we obtain 
\begin{align*}
\||u_n|^qh_n\|_{L^1_{b}}\to \||u|^qh\|_{L^1_{b}}.
\end{align*}
Since $|u_n(x)|^qh_n(x)\to |u(x)|^qh(x)$ a.e in $\mathbb{R}^2$ we conclude 
\begin{equation}\nonumber
|u_n|^{q}h_n \to |u|^{q}h \;\;\mbox{in}\;\;L^{1}_{b}(\mathbb{R}^2).
\end{equation}
$(ii)$ For  $\alpha>4\pi$ close to $4\pi$  and $q>2$ as in the item~$(i)$, by \eqref{F<exp} we can write 
\begin{align*}
|F(u_n)|\le \zeta|u_n|^{\tau+1}+D|u_n|^{q}\big(e^{\alpha(1-\frac{b}{2})u_n^2}-1\big).
\end{align*}
By  item~$(i)$ and  \eqref{lp-conv}  we have
\begin{equation}\nonumber
|u_n|^{q}\big(e^{\alpha(1-\frac{b}{2})u^2_n}-1\big) \to |u|^{q}\big(e^{\alpha(1-\frac{b}{2})u^2}-1\big)  \;\;\mbox{in}\;\;L^{1}_{b}(\mathbb{R}^2)
\end{equation}
and 
\begin{equation}\nonumber
u_n\to u \;\;\mbox{in}\;\;L^{\tau+1}_{b}(\mathbb{R}^2).
\end{equation}
Now, we can apply the generalized Lebesgue dominated convergence theorem to derive that
$$F(u_n)\to F(u)\;\;\mbox{in} \;\; L^{1}_{b}(\mathbb{R}^2).$$
From \eqref{ft<exp} and  item~$(i)$, by similar argument  one has
$$f(u_n)u_n\to f(u)u\;\;\mbox{in} \;\; L^{1}_{b}(\mathbb{R}^2).$$
\end{proof}
\subsection{The minimax strategy}
In order to get our existence results  we will apply the minimax approach. We will use the same notations:
\begin{enumerate}
\item $S(a)=\left\{u\in H^{1}(\mathbb{R}^2)\; :\; \|u\|_{L^2}=a \right\}$, where $ \|\quad\|_{L^p}$ denotes the usual norm of the Lebesgue space $L^{p}(\mathbb{R}^{2})$, $p\in [1, +\infty)$\\
\item  $J:  H^{1}(\mathbb{R}^2)\to \mathbb{R}$ given by
\begin{equation}\nonumber
J(u)=\frac{1}{2}\int_{\mathbb{R}^{2}}|\nabla u|^{2}d x-\int_{\mathbb{R}^2}|x|^{-b}F(u)dx
\end{equation}
\item $\mathcal{H}: H= H^{1}(\mathbb{R}^2)\times \mathbb{R}\to  H^{1}(\mathbb{R}^{2})$ with
\begin{equation}\nonumber
\mathcal{H}(u,s)=e^{s}u(e^{s}x)
\end{equation}
\item  $\tilde{J}: H=H^{1}(\mathbb{R}^2)\times \mathbb{R} \to \mathbb{R}$ is defined by
\begin{equation}\nonumber
\begin{aligned}
\tilde{J}(u,s)=\frac{e^{2s}}{2}\int_{\mathbb{R}^{2}}|\nabla u|^{2} d x-\frac{1}{e^{(2-b)s}}\int_{\mathbb{R}^2}|x|^{-b}F(e^{s}u)d x=J(\mathcal{H}(u,s)).
\end{aligned}
\end{equation}
\end{enumerate}
Note that
\begin{equation}\label{N=2grad-H}
   \int_{\mathbb{R}^2}|\nabla\mathcal{H}(u,s)|^{2}dx=e^{2s}\int_{\mathbb{R}^N}|\nabla u|^{2}dx.
\end{equation}
and 
\begin{equation}\label{N=2Lp-H}
    \int_{\mathbb{R}^2}|x|^{-b}|\mathcal{H}(u,s)|^{\xi}dx=e^{(\xi-2+b)s}\int_{\mathbb{R}^2}|x|^{-b}|u|^{\xi}d x,\;\;\mbox{for all}\;\; \xi\ge 2 \;\;\mbox{and}\;\; 0\le b<2.
\end{equation}
\begin{lemma}\label{vicentejTM}
Let $u\in S(a)$  be arbitrary but fixed. Then we have:  
\begin{itemize}
\item [$(i)$]  If $s\to -\infty$, then  $\|\nabla\mathcal{H}(u,s)\|_{L^2}\to 0$ and $J(\mathcal{H}(u,s))\to 0$.
\item [$(ii)$] If $s\to +\infty$, then  $\|\nabla\mathcal{H}(u,s)\|_{L^2}\to +\infty$ and $J(\mathcal{H}(u,s))\to -\infty$.
\end{itemize}
\end{lemma}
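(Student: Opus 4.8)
The plan is to reduce both assertions to the scaling identities for $\mathcal{H}$ together with the growth bounds $(f_1)$--$(f_3)$ and their consequence \eqref{F<exp}. The claims on the gradient are immediate: by \eqref{N=2grad-H} we have $\|\nabla\mathcal{H}(u,s)\|_{L^2}^2=e^{2s}\|\nabla u\|_{L^2}^2$, which tends to $0$ as $s\to-\infty$ and to $+\infty$ as $s\to+\infty$. For the energy I would work with $J(\mathcal{H}(u,s))=\tilde J(u,s)$ in the explicit form recorded above, using that the compact embedding \eqref{n=2compact} yields $0<\|u\|_{L^\xi_b}<\infty$ for every $\xi\ge 2$, the positivity coming from $u\in S(a)$ with $a>0$.

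For part $(i)$ I would fix $q>2$, $\alpha>4\pi$, $\zeta>0$ and plug \eqref{F<exp} into $\tilde J(u,s)$. Since $F\ge0$ by $(f_2)$, this gives, for all $s\le0$,
\[
0\le \frac{1}{e^{(2-b)s}}\int_{\mathbb{R}^2}|x|^{-b}F(e^su)\,dx\le \zeta\,e^{(\tau-1+b)s}\|u\|_{L^{\tau+1}_b}^{\tau+1}+D\,e^{(q-2+b)s}\int_{\mathbb{R}^2}|x|^{-b}|u|^{q}\big(e^{\alpha(1-\frac b2)e^{2s}u^2}-1\big)\,dx .
\]
Because $\tau>3$ and $q>2$ while $0<b<2$, the exponents $\tau-1+b$ and $q-2+b$ are positive, so the two prefactors vanish as $s\to-\infty$; and the remaining integral also vanishes by dominated convergence, since for $s\le0$ one has $e^{2s}\le1$, so its integrand is dominated by $|u|^{q}(e^{\alpha(1-\frac b2)u^2}-1)$, which belongs to $L^1_b(\mathbb{R}^2)$ by H\"older's inequality together with Lemma~\ref{Ibero}$(i)$ and \eqref{n=2compact}. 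Squeezing
\[
\frac{e^{2s}}{2}\|\nabla u\|_{L^2}^2-o_s(1)\le J(\mathcal{H}(u,s))\le \frac{e^{2s}}{2}\|\nabla u\|_{L^2}^2
\]
and letting $s\to-\infty$ yields $J(\mathcal{H}(u,s))\to0$.

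For part $(ii)$ I would integrate the pointwise lower bound in $(f_3)$ to obtain $F(t)\ge\frac{\mu}{p}|t|^{p}$ for every $t\in\mathbb{R}$, with $p>4$; combined with \eqref{N=2Lp-H} this gives
\[
J(\mathcal{H}(u,s))\le \frac{e^{2s}}{2}\|\nabla u\|_{L^2}^2-\frac{\mu}{p}\,e^{(p-2+b)s}\|u\|_{L^p_b}^{p}.
\]
Since $p>4$ and $0<b<2$ force $p-2+b>2$, the negative term grows strictly faster than the gradient term as $s\to+\infty$, and as $\|u\|_{L^p_b}>0$ we conclude $J(\mathcal{H}(u,s))\to-\infty$.

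I expect the only non-routine point to be the uniform (in $s$) $L^1_b$-domination of the singular exponential term in $(i)$: this is precisely where Lemma~\ref{Ibero}$(i)$ is used, the monotonicity $e^{2s}\le1$ for $s\le0$ providing a dominating function free of $s$, while H\"older's inequality and \eqref{n=2compact} absorb the $|u|^q$ factor. Everything else is the same scaling computation already carried out in Lemma~\ref{vicentej} for the power-type case.
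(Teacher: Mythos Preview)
Your proof is correct. Part $(ii)$ is identical to the paper's argument. For part $(i)$, however, you take a genuinely different route: rather than working with $\mathcal{H}(u,s)$ and invoking the quantitative Trudinger--Moser bound (Lemma~\ref{Ibero}$(ii)$), which requires $\|\mathcal{H}(u,s)\|<1$ and hence the standing assumption $a\in(0,1)$, you unwind the scaling to keep the fixed function $u$ in the integrand and exploit the monotonicity $e^{2s}\le 1$ for $s\le 0$ to produce an $s$-independent dominating function. This lets you use only the qualitative statement Lemma~\ref{Ibero}$(i)$ together with dominated convergence. Your approach is more elementary and, for this particular lemma, does not need the hypothesis $a<1$; the paper's approach, by contrast, reuses the machinery of Lemma~\ref{lemma-conv}$(a)$ and thereby keeps the argument uniform with later estimates where the restriction $a<1$ is essential.
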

\begin{proof}
From \eqref{N=2grad-H} and \eqref{N=2Lp-H} with $\xi>2$ we obtain 
\begin{equation}\label{TMgradH}
\|\nabla\mathcal{H}(u,s)\|_{L^2}\to 0 \;\;\mbox{and}\;\; \|\mathcal{H}(u,s)\|_{L^{\xi}_b}\to 0\;\;\mbox{as}\;\; s\to -\infty.
\end{equation}
Thus, since $a\in (0,1)$  and, from \eqref{N=2Lp-H} with $b=0$ and $\xi =2$,  $\|\mathcal{H}(u,s)\|_{L^{2}}=\|u\|_{L^{2}}=a$ there are there are $s_0< 0$ and $\eta \in (0,1)$ such that
\begin{align*}
\|\mathcal{H}(u,s)\|^2\le \eta<1\;\;\mbox{for all}\;\; s\in (-\infty, s_0].
\end{align*}
Arguing  in Lemma~\ref{lemma-conv}-$(a)$,  for $\alpha>4\pi$ with $\alpha$ close to $4\pi$ we can take $t>1$ close to $1$ such that $t \alpha \eta <4\pi$ and 
\begin{equation}\nonumber
\int_{\mathbb{R}^2}\big(e^{\alpha(1-\frac{b}{2})\mathcal{H}^2(u,s)}-1\big)^{t}\frac{dx}{|x|^b}\le \int_{\mathbb{R}^2}\big(e^{t\alpha\eta(1-\frac{b}{2})\big(\frac{|\mathcal{H}(u,s)|}{\|\mathcal{H}(u,s)\|}\big)^{2}}-1\big)\frac{dx}{|x|^b},\;\; \mbox{for any}\;\; s\in (-\infty, s_0].
\end{equation}
Thus, from Lemma~\ref{Ibero}, there is $C=C(t,\eta,a,b)>0$ with
\begin{equation}\nonumber
\|\big(e^{\alpha(1-\frac{b}{2})\mathcal{H}^2(u,s)}-1\big)\|_{L^{t}_{b}}\le  C,\;\; \mbox{for any}\;\; s\in (-\infty, s_0].
\end{equation}
 Now, from \eqref{F<exp} and  the Hölder’s inequality, for any $s\in (-\infty, s_0]$
 \begin{align*}
 \|F(\mathcal{H}(u,s))\|_{L^{1}_{b}}& \le \zeta \|\mathcal{H}(u,s)\|^{\tau+1}_{L^{\tau+1}_{b}}+D\||\mathcal{H}(u,s)|^{q}\big(e^{\alpha(1-\frac{b}{2})\mathcal{H}^2(u,s)}-1\big)\|_{L^1_{b}}\\
 & \le \zeta \|\mathcal{H}(u,s)\|^{\tau+1}_{L^{\tau+1}_{b}}+CD\|\mathcal{H}(u,s)\|^q_{L^{\frac{qt}{t-1}}_b}.
 \end{align*}
 By \eqref{TMgradH}
 \begin{align*}
\|\nabla\mathcal{H}(u,s)\|_{L^2}\to 0 \;\;\mbox{and}\;\;  \|F(\mathcal{H}(u,s))\|_{L^{1}_{b}}\to 0, \;\;\mbox{as}\;\; s\to -\infty.
 \end{align*}
 from where it follows that
  \begin{align*}
 J(\mathcal{H}(u,s))\to 0, \;\;\mbox{as}\;\; s\to -\infty.
 \end{align*}
This proves $(i)$. To  show $(ii)$, note that \eqref{N=2grad-H}  yields
 \begin{align*}
\|\nabla\mathcal{H}(u,s)\|_{L^2}\to +\infty  \;\;\mbox{as}\;\; s\to +\infty.
 \end{align*}
 In addition, the assumption $(f_3)$,  \eqref{N=2grad-H}  and \eqref{N=2Lp-H} ensures
 \begin{equation}\label{F-low}
 \begin{aligned}
 J(\mathcal{H}(u,s))& \le \frac{1}{2}\|\nabla \mathcal{H}(u,s)\|^2_{L^2}-\frac{\mu}{p}\|\mathcal{H}(u,s)\|^{p}_{L^{p}_b}\\
 &= \frac{e^{2s}}{2}\|\nabla u\|^2_{L^2}-\frac{\mu}{p}e^{(p-2+b)s}\|u\|^{p}_{L^{p}_b}\\
 &\to -\infty\;\;\mbox{as}\;\; s\to +\infty,
 \end{aligned}
 \end{equation}
 where we have used  $p> 4$ in $(f_3)$.
\end{proof}
\begin{lemma}\label{L=A<B}
There exists $K=K(a,b,\mu)>0$ small enough satisfying
\begin{equation}\label{JA<JB}
0<\sup_{u\in A}J(u)<\inf_{u\in B}J(u),
\end{equation}
where 
\begin{align*}
A=\big\{u\in S(a)\; : \;  \|\nabla u\|^2_{L^2}\le K\big\}\;\;\mbox{and}\;\; B=\big\{u\in S(a)\; : \;  \|\nabla u\|^2_{L^2}=2K\big\}.
\end{align*}
Moreover, $K(a,b,\mu)\to 0$ when $\mu\to \infty$.
\end{lemma}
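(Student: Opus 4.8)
The plan is to transcribe the proof of Lemma~\ref{PJ1}, replacing the Gagliardo--Nirenberg control of the subcritical term by a Trudinger--Moser control of the exponential term. Fix once and for all $\eta=(1+a^{2})/2\in(a^{2},1)$, and from now on only admit $K\le(1-a^{2})/4$; then every $u\in A\cup B$ satisfies $\|u\|^{2}=\|\nabla u\|^{2}_{L^2}+a^{2}\le 2K+a^{2}\le\eta<1$. The aim is to show that on $A\cup B$,
\begin{equation}\label{F-TM-barrier}
\int_{\mathbb{R}^{2}}|x|^{-b}F(u)\,dx\le C_{1}\big(\|\nabla u\|^{2}_{L^2}\big)^{\frac{\tau-1+b}{2}}+C_{2}\big(\|\nabla u\|^{2}_{L^2}\big)^{\beta},
\end{equation}
with $C_{1}=C_{1}(a,b)$, $C_{2}=C_{2}(a,b,\mu)$ and exponents $\tfrac{\tau-1+b}{2}>1$ (true since $\tau>3$) and $\beta>1$; granted \eqref{F-TM-barrier}, the lemma follows exactly as in Lemma~\ref{PJ1}.

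To prove \eqref{F-TM-barrier}, fix $q>4$, $\zeta>0$, then $\alpha>4\pi$ with $\alpha\eta<4\pi$, then $t\in(1,4\pi/(\alpha\eta))$ so that $t\alpha\eta<4\pi$, and set $t'=t/(t-1)$. Splitting the integral by means of \eqref{F<exp}, the first summand $\zeta\|u\|^{\tau+1}_{L^{\tau+1}_{b}}$ is estimated by the Gagliardo--Nirenberg inequality of Theorem~\ref{GNU} specialized to $N=2$ (for which every power larger than $2$ is admissible), giving $\|u\|^{\tau+1}_{L^{\tau+1}_{b}}\le C(a,b)(\|\nabla u\|^{2}_{L^2})^{(\tau-1+b)/2}$. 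For the exponential summand, H\"older's inequality with exponents $t,t'$ yields
$$\big\||u|^{q}\big(e^{\alpha(1-\frac b2)u^{2}}-1\big)\big\|_{L^{1}_{b}}\le \|u\|^{q}_{L^{qt'}_{b}}\,\big\|e^{\alpha(1-\frac b2)u^{2}}-1\big\|_{L^{t}_{b}}.$$
Using $(e^{s}-1)^{t}\le e^{ts}-1$ and writing $v=u/\|u\|$, so that $\|\nabla v\|_{L^2}\le1$ and $\|v\|_{L^2}=a/\|u\|\le1$, Lemma~\ref{Ibero}(ii) together with the monotonicity of $\alpha\mapsto\mathrm{AT}(\alpha,b)$ gives
$$\big\|e^{\alpha(1-\frac b2)u^{2}}-1\big\|^{t}_{L^{t}_{b}}\le\big\|e^{t\alpha\|u\|^{2}(1-\frac b2)v^{2}}-1\big\|_{L^{1}_{b}}\le\mathrm{AT}(t\alpha\eta,b)\,\|v\|^{2-b}_{L^2}\le\mathrm{AT}(t\alpha\eta,b)<\infty,$$
a constant depending only on $a$ and $b$. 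On the other hand, Theorem~\ref{GNU} bounds $\|u\|^{q}_{L^{qt'}_{b}}$ by $C(a,b)(\|\nabla u\|^{2}_{L^2})^{\beta}$ with $\beta=\tfrac q2-\tfrac{2-b}{2t'}$, and since $\tfrac{2-b}{2t'}<1$ and $q>4$ we have $\beta>1$. Collecting these estimates gives \eqref{F-TM-barrier}, with $C_{2}$ a multiple of the constant $D=D(q,\alpha,\mu,b)$ appearing in \eqref{F<exp}.

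Granted \eqref{F-TM-barrier}, the argument of Lemma~\ref{PJ1} applies without change. Since $F\ge0$ by $(f_{2})$, for $u\in A$ and $v\in B$ we obtain
$$J(v)-J(u)\ge\tfrac12\|\nabla v\|^{2}_{L^2}-\tfrac12\|\nabla u\|^{2}_{L^2}-\int_{\mathbb{R}^{2}}|x|^{-b}F(v)\,dx\ge\tfrac K2-C_{1}2^{\frac{\tau-1+b}{2}}K^{\frac{\tau-1+b}{2}}-C_{2}2^{\beta}K^{\beta}\ge\tfrac K4$$
whenever $K$ is small enough, while $J(u)\ge\|\nabla u\|^{2}_{L^2}\big[\tfrac12-C_{1}(\|\nabla u\|^{2}_{L^2})^{\frac{\tau-3+b}{2}}-C_{2}(\|\nabla u\|^{2}_{L^2})^{\beta-1}\big]\ge\tfrac14\|\nabla u\|^{2}_{L^2}>0$ for every $u\in A$ (note $\|\nabla u\|_{L^2}>0$ on $S(a)$, and $A\neq\emptyset$ by Lemma~\ref{vicentejTM}). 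Hence $0<\sup_{A}J$ and $\inf_{B}J\ge\sup_{A}J+\tfrac K4>\sup_{A}J$, which is \eqref{JA<JB}. Finally, evaluating \eqref{f<exp} at $t=1$ and invoking $(f_{3})$ gives $D\ge(\mu-\zeta)/(e^{\alpha(1-b/2)}-1)\to\infty$ as $\mu\to\infty$, so $C_{2}\to\infty$; since the smallness requirement on $K$ has the form $K\le\min\{\kappa_{0}(a,b),\,\kappa_{1}(a,b)\,C_{2}^{-1/(\beta-1)}\}$, we may take $K(a,b,\mu)\to0$ as $\mu\to\infty$.

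The main difficulty is the two-dimensional Trudinger--Moser bookkeeping: one must first fix $\eta<1$ and then choose $q,\alpha,t$ in a mutually compatible fashion so that \emph{simultaneously} $t\alpha\eta<4\pi$ (to keep $\|e^{\alpha(1-b/2)u^{2}}-1\|_{L^{t}_{b}}$ bounded uniformly over $A\cup B$) and $\beta>1$ (so that the exponential contribution is $o(\|\nabla u\|^{2}_{L^2})$ and cannot compete with the leading quadratic term), all the while keeping $\|u\|^{2}\le\eta<1$ on $A\cup B$. Once the parameters are pinned down, the remainder is a routine copy of Lemma~\ref{PJ1}.
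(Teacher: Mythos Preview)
Your proof is correct and follows essentially the same route as the paper: estimate $\int|x|^{-b}F(u)$ via \eqref{F<exp}, control the polynomial piece by weighted Gagliardo--Nirenberg and the exponential piece by H\"older plus the singular Trudinger--Moser bound of Lemma~\ref{Ibero}, then run the algebra of Lemma~\ref{PJ1}. The only noteworthy differences are tactical: you fix $\eta=(1+a^{2})/2$ and the cap $K\le(1-a^{2})/4$ up front (which makes the uniformity of the Trudinger--Moser constant over $A\cup B$ explicit, something the paper leaves implicit), you take $q>4$ rather than $q>2$ with $t$ near $1$ to force $\beta>1$, and your proof that $C_{2}\to\infty$ by evaluating \eqref{f<exp} at $t=1$ against $(f_{3})$ is cleaner than the paper's argument via a test function $u_{0}\in A$.
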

\begin{proof}
From \cite{CKN},  we derive the following Gagliardo–Sobolev type inequality
\begin{equation}\label{GSN=2}
\|u\|_{L^{\xi}_b}\le C(\xi, b)\|\nabla u\|^{\gamma}_{L^2}\|u\|^{1-\gamma}_{L^2}
\end{equation}
where $\gamma=1-\frac{2}{\xi}(1-\frac{b}{2})$, with $\xi>2$.  For $u\in S(a)$ with $\|\nabla u\|^2_{L^2}<1-a^2$, from \eqref{F<exp}, Lemma~\ref{Ibero} and  the Hölder’s inequality
\begin{equation}\label{F<Lps}
 \|F(u)\|_{L^{1}_{b}} \le \zeta \|u\|^{\tau+1}_{L^{\tau+1}_{b}}+D_1\|u\|^q_{L^{qt^{\prime}}_b},
\end{equation}
where $q>2$, $t^{\prime}=\frac{t}{t-1}$, $t>1$ closed to $1$, $\zeta>0$ does not depend on $\mu$, while $D_1$ depends on $\mu$.  By using \eqref{GSN=2} and \eqref{F<Lps}
\begin{equation}\label{F<grads}
\begin{aligned}
 \|F(u)\|_{L^{1}_{b}} & \le \zeta \|u\|^{\tau+1}_{L^{\tau+1}_{b}}+D_1\|u\|^q_{L^{\frac{qt}{t-1}}_b}\\
 &  \le \zeta C(\tau+1,b)^{\tau+1}\|\nabla u\|^{(\tau+1)(1-\frac{2}{\tau+1}(1-\frac{b}{2}))}_{L^2}\|u\|^{(\tau+1)(\frac{2}{\tau+1}(1-\frac{b}{2}))}_{L^2}\\
 &+D_1C^{q}(qt^{\prime},b)\|\nabla u\|^{q(1-\frac{2}{qt^{\prime}}(1-\frac{b}{2}))}_{L^2}\|u\|^{q\frac{2}{qt^{\prime}}(1-\frac{b}{2})}_{L^2}\\
 & =C_1\big(\|\nabla u\|^{2}_{L^2}\big)^{\frac{\tau-1+b}{2}}a^{2-b}+D_2\big(\|\nabla u\|^{2}_{L^2}\big)^{\frac{q}{2}-\frac{1}{t^{\prime}}(1-\frac{b}{2})}a^{\frac{2}{t^{\prime}}(1-\frac{b}{2})},
\end{aligned}
\end{equation}
where $C_1=\zeta C(\tau+1,b)^{\tau+1}$ and  $D_2=D_1C^{q}(qt^{\prime},b)$. Since $F\ge0$ on $\mathbb{R}$, for any $u,v\in H^{1}(\mathbb{R}^2)$ we have
\begin{equation}\label{JvJu}
J(v)-J(u)\ge \frac{1}{2}\|\nabla v\|^2_{L^2}-\frac{1}{2}\|\nabla u\|^2_{L^2}- \|F(v)\|_{L^{1}_{b}}.
\end{equation}
Let $0<K=K(a,b,\mu)<\frac{1-a^2}{2}$  to be determined later.  For $\|\nabla u\|^2_{L^2}\le K$ and $\|\nabla v\|^2_{L^2}=2K$,  \eqref{F<grads} and \eqref{JvJu} yield
\begin{equation*}
\begin{aligned}
J(v)-J(u)& \ge \frac{1}{2} K-C_1(2K)^{\frac{\tau-1+b}{2}}a^{2-b}-D_2(2K)^{\frac{q}{2}-\frac{1}{t^{\prime}}(1-\frac{b}{2})}a^{\frac{2}{t^{\prime}}(1-\frac{b}{2})}\\
&=K\left[\frac{1}{2}-C_12^{\frac{\tau-1+b}{2}}K^{\frac{\tau-1+b}{2}-1}a^{2-b}-D_22^{\frac{q}{2}-\frac{1}{t^{\prime}}(1-\frac{b}{2})}K^{\frac{q}{2}-\frac{1}{t^{\prime}}(1-\frac{b}{2})-1}a^{\frac{2}{t^{\prime}}(1-\frac{b}{2})}\right]. 
\end{aligned}
\end{equation*}
Since $0<b<2$, $q>2$, $\tau>3$ and $t^{\prime}=\frac{t}{t-1}$ with $t$ closed to $1$ we have $\tau-1+b>2$ and $\frac{q}{2}-\frac{1}{t^{\prime}}(1-\frac{b}{2})>1$. So, we may choose
\begin{equation}\label{K-defTM}
K(a,b,\mu)=\min\left\{\frac{1-a^2}{4},\left[ \frac{1}{2^{\frac{\tau-1+b}{2}}8C_1a^{2-b}}\right]^{\frac{2}{\tau-3+b}},\left[\frac{1}{2^{\frac{q}{2}-\frac{1}{t^{\prime}}(1-\frac{b}{2})}8D_2a^{\frac{2}{t^{\prime}}(1-\frac{b}{2})}}\right]^{\frac{1}{\frac{q}{2}-\frac{1}{t^{\prime}}(1-\frac{b}{2})-1}} \right\}
\end{equation}
to obtain 
\begin{equation*}
\begin{aligned}
J(v)-J(u)\ge \frac{K}{2}>0.
\end{aligned}
\end{equation*}
This prove \eqref{JA<JB}. At least,  for $u_0\in A$ the assumption $(f_3)$ and \eqref{F<Lps} imply
\begin{equation}\nonumber
\frac{\mu}{p}\|u_0\|^{p}_{L^{p}_{b}}\le  \|F(u_0)\|_{L^{1}_{b}}\le  \zeta \|u_0\|^{\tau+1}_{L^{\tau+1}_{b}}+D_1\|u_0\|^q_{L^{qt^{\prime}}_b}.
\end{equation}
Hence, we must have $D_2=D_1C^{q}(qt^{\prime},b)\to\infty$ when $\mu\to\infty$. So,  \eqref{K-defTM} shows that    $K(a,b,\mu)\to 0$ when $\mu\to \infty$.
\end{proof}
\begin{corollary}\label{coro-cross} Let $K=K(a,b,\mu)$ be given by  \eqref{K-defTM} and $A$  as in Lemma~\ref{L=A<B}. Then $J>0$ on $A$ and 
\begin{equation}\nonumber
J_{**}=\inf\Big\{J(u)\;:\; u\in S(a)\;\;\text{and}\;\; \|\nabla u\|^{2}_{L^2}=\frac{K}{2}\Big\}>0.
\end{equation}
\end{corollary}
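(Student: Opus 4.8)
The plan is to mimic the argument already used for the analogous corollary in the Hardy--Sobolev setting, the crucial input now being the estimate \eqref{F<grads} established inside the proof of Lemma~\ref{L=A<B}. First I would fix $u\in A$, so that $\|\nabla u\|^{2}_{L^2}\le K$; since $K\le (1-a^2)/4<1-a^2$ by the very definition \eqref{K-defTM}, the subcritical condition $\|\nabla u\|^2_{L^2}<1-a^2$ that underlies \eqref{F<grads} is met, and hence
\begin{equation}\nonumber
\|F(u)\|_{L^1_b}\le C_1\big(\|\nabla u\|^2_{L^2}\big)^{\frac{\tau-1+b}{2}}a^{2-b}+D_2\big(\|\nabla u\|^2_{L^2}\big)^{\frac{q}{2}-\frac{1}{t'}(1-\frac{b}{2})}a^{\frac{2}{t'}(1-\frac{b}{2})}.
\end{equation}

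Next, using $F\ge 0$ to write $J(u)\ge \tfrac12\|\nabla u\|^2_{L^2}-\|F(u)\|_{L^1_b}$ and factoring out $\|\nabla u\|^2_{L^2}$, one obtains
\begin{equation}\nonumber
J(u)\ge \|\nabla u\|^2_{L^2}\Big[\tfrac12-C_1a^{2-b}\big(\|\nabla u\|^2_{L^2}\big)^{\frac{\tau-3+b}{2}}-D_2a^{\frac{2}{t'}(1-\frac{b}{2})}\big(\|\nabla u\|^2_{L^2}\big)^{\frac{q}{2}-\frac{1}{t'}(1-\frac{b}{2})-1}\Big].
\end{equation}
Both exponents $\tfrac{\tau-3+b}{2}$ and $\tfrac{q}{2}-\tfrac{1}{t'}(1-\tfrac b2)-1$ are strictly positive (this was already checked in Lemma~\ref{L=A<B}), so the bracketed quantity is nondecreasing in $\|\nabla u\|^2_{L^2}$; replacing $\|\nabla u\|^2_{L^2}$ by its upper bound $K$ and invoking the last two entries of the minimum \eqref{K-defTM}, i.e. $K^{\frac{\tau-3+b}{2}}\le \big(2^{\frac{\tau-1+b}{2}}8C_1a^{2-b}\big)^{-1}$ and $K^{\frac{q}{2}-\frac{1}{t'}(1-\frac b2)-1}\le \big(2^{\frac{q}{2}-\frac{1}{t'}(1-\frac b2)}8D_2a^{\frac{2}{t'}(1-\frac b2)}\big)^{-1}$, each subtracted term is bounded by $\tfrac18$. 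Hence $J(u)\ge \tfrac14\|\nabla u\|^2_{L^2}$ for every $u\in A$. Since any $u\in S(a)$ has $a>0$ and is therefore not a.e. constant, $\|\nabla u\|^2_{L^2}>0$, which gives $J>0$ on $A$.

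For the second assertion I would simply apply the inequality $J(u)\ge \tfrac14\|\nabla u\|^2_{L^2}$ to any $u\in S(a)$ with $\|\nabla u\|^2_{L^2}=K/2$, getting $J(u)\ge K/8$, whence $J_{**}\ge K/8>0$. I do not expect a genuine obstacle: the corollary is a direct bookkeeping consequence of \eqref{F<grads} and of the explicit choice of $K$ in \eqref{K-defTM}. The only point requiring a moment's care is to remain in the range $\|\nabla u\|^2_{L^2}<1-a^2$ where the singular Trudinger--Moser estimate of Lemma~\ref{Ibero}, and hence \eqref{F<grads}, is available — which is precisely why the term $(1-a^2)/4$ was built into the minimum defining $K$.
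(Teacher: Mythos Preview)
Your proof is correct and follows essentially the same route as the paper's own argument: bound $\|F(u)\|_{L^1_b}$ via \eqref{F<grads}, factor out $\|\nabla u\|^2_{L^2}$, replace it by $K$ inside the bracket, and use the explicit form \eqref{K-defTM} of $K$ to conclude $J(u)\ge \tfrac14\|\nabla u\|^2_{L^2}$ and hence $J_{**}\ge K/8$. One small slip: the bracketed quantity is \emph{nonincreasing} (not nondecreasing) in $\|\nabla u\|^2_{L^2}$, which is precisely why substituting the upper bound $K$ yields a valid lower bound.
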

\begin{proof}
For any $u\in A$ with  $K$ be given by  \eqref{K-defTM},  we are in a position to repeat the argument in \eqref{F<grads} to obtain
\begin{align*}
J(u) &\ge \frac{1}{2}\|\nabla u\|^{2}_{L^2}-C_1\big(\|\nabla u\|^{2}_{L^2}\big)^{\frac{\tau-1+b}{2}}a^{2-b}-D_2\big(\|\nabla u\|^{2}_{L^2}\big)^{\frac{q}{2}-\frac{1}{t^{\prime}}(1-\frac{b}{2})}a^{\frac{2}{t^{\prime}}(1-\frac{b}{2})}\\
&\ge \|\nabla u\|^{2}_{L^2}\Big[\frac{1}{2}-C_1K^{\frac{\tau-3+b}{2}}a^{2-b}-D_2 K^{\frac{q}{2}-\frac{1}{t^{\prime}}(1-\frac{b}{2})-1}a^{\frac{2}{t^{\prime}}(1-\frac{b}{2})}\Big]\\
& \ge \frac{1}{2}\|\nabla u\|^{2}_{L^2}\Big[1-\frac{1}{2^{\frac{\tau-1+b}{2}}4}-\frac{1}{2^{\frac{q}{2}-\frac{1}{t^{\prime}}(1-\frac{b}{2})}4}\Big]\\
& \ge  \frac{1}{4}\|\nabla u\|^{2}_{L^2}>0.
\end{align*}
Of course, we have $J_{**}\ge K/8$.
\end{proof}
 Now, for fixed $u_0\in S(a)$ we can apply  Lemma~\ref{vicentejTM} and Corollary~\ref{coro-cross} to obtain two numbers $s_1=s_1(u_0,a, b,\mu)<0$ and  $s_2=s_2(u_0,a, b,\mu)>0$ such that the functions $u_{1,\mu}=\mathcal{H}(u_0, s_1)$ and $u_{2,\mu}=\mathcal{H}(u_0, s_2)$ satisfying
 \begin{equation}
 \|\nabla u_{1,\mu}\|^{2}_{L^2}<\frac{K(a,b,\mu)}{2},\;\; \|\nabla u_{2,\mu}\|^{2}_{L^2}>2K(a,b,\mu),\;\; J(u_{1,\mu})>0\;\;\text{and}\;\; J(u_{2,\mu})<0.
 \end{equation}
 Following \cite{jeanjean1}, we fix the  mountain pass level 
 \begin{align*}
 \gamma_{\mu}(a,b)=\inf_{h\in \Gamma}\max_{t\in [0,1]}J(h(t))
 \end{align*}
 where
 \begin{align*}
 \Gamma=\Big\{h\in C([0,1], S(a))\;:\;\|\nabla h(0)\|^{2}_{L^2}<\frac{K(a,b,\mu)}{2}\;\;\text{and}\;\; J(h(1))<0\Big\}.
 \end{align*}
 For each $h\in \Gamma$, Corollary~\ref{coro-cross} yields  $\|\nabla h(1)\|^{2}_{L^2}>K(a,b,\mu)$ and there is $t_0\in [0,1]$ such that  $\|\nabla h(t_0)\|^{2}_{L^2}=\frac{K(a,b,\mu)}{2}$
 and consequently
 \begin{align*}
 \max_{t\in [0,1]}J(h(t))\ge J(h(t_0))\ge J_{**}>0.
 \end{align*}
 So, $ \gamma_{\mu}(a,b)>J_{**}.$
 \begin{lemma} \label{lemmaminx-0}For any $0<b<2$, we have 
 \begin{align*}
 \lim_{\mu\to\infty}\gamma_{\mu}(a,b)=0.
 \end{align*}
 \end{lemma}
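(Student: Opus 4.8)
The plan is to reproduce, in the singular exponential setting, the argument of Lemma~\ref{mu^*}: exhibit one explicit admissible path in $\Gamma$ and bound $J$ along it by a function of a single positive parameter whose maximum decays to $0$ as $\mu\to\infty$.

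First I would fix $u_0\in S(a)$ and use the numbers $s_1<0<s_2$ furnished above by Lemma~\ref{vicentejTM} and Corollary~\ref{coro-cross}, setting $h_0(t)=\mathcal{H}(u_0,(1-t)s_1+ts_2)$ for $t\in[0,1]$. Since $\|\mathcal{H}(u_0,\cdot)\|_{L^2}\equiv a$, the curve $h_0$ is continuous into $S(a)$; moreover $h_0(0)=u_{1,\mu}$ satisfies $\|\nabla h_0(0)\|^2_{L^2}<K(a,b,\mu)/2$ and $h_0(1)=u_{2,\mu}$ satisfies $J(h_0(1))<0$, so $h_0\in\Gamma$ and therefore $\gamma_\mu(a,b)\le\max_{t\in[0,1]}J(h_0(t))$.

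Next, using the lower bound $F(t)\ge\frac{\mu}{p}|t|^p$ provided by $(f_3)$ together with the scaling identities \eqref{N=2grad-H}--\eqref{N=2Lp-H} — this is precisely the estimate already recorded in \eqref{F-low} — I would obtain, for every $s\in\mathbb{R}$,
$$
J(\mathcal{H}(u_0,s))\le\frac{e^{2s}}{2}\|\nabla u_0\|^2_{L^2}-\frac{\mu}{p}\,e^{(p-2+b)s}\|u_0\|^{p}_{L^{p}_b}.
$$
Writing $r=e^{2s}$ and noting that $p>4$ and $0<b<2$ force $\beta:=\frac{p-2+b}{2}>1$, it follows that
$$
\gamma_\mu(a,b)\le\max_{r>0}\Big\{\tfrac{r}{2}\|\nabla u_0\|^2_{L^2}-\tfrac{\mu}{p}\,r^{\beta}\|u_0\|^{p}_{L^{p}_b}\Big\}.
$$
A one–variable computation (exactly the one carried out in Lemma~\ref{mu^*}) shows that this maximum is attained at a unique interior point and equals $C_2\,\mu^{-1/(\beta-1)}=C_2\,\mu^{-\frac{2}{p-4+b}}$ for some constant $C_2=C_2(u_0,p,b)$ independent of $\mu$; letting $\mu\to\infty$ then yields the claim.

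There is no genuine obstacle here: the optimization is elementary and the only point that merits a remark is that $s_1,s_2$, and hence the path $h_0$, depend on $\mu$. This is harmless because the bound is obtained after passing to the maximum over all $r>0$, which dominates the maximum of $J$ over the segment $\{e^{2s}:s\in[s_1,s_2]\}$ no matter how that segment moves with $\mu$. In effect the present argument is the verbatim transcription of the proof of Lemma~\ref{mu^*}, once the pure–power lower bound is replaced by the one in $(f_3)$ and the exponents $N(q-2)+2b$, $N(q-2)-(4-2b)$ are replaced by their $N=2$ counterparts $p-2+b$ and $p-4+b$.
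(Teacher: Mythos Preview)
Your proposal is correct and follows exactly the paper's own argument: fix the path $h_0(t)=\mathcal{H}(u_0,(1-t)s_1+ts_2)$, use $(f_3)$ via \eqref{F-low} to bound $J(\mathcal{H}(u_0,s))$ by $\tfrac{r}{2}\|\nabla u_0\|^2_{L^2}-\tfrac{\mu}{p}r^{(p-2+b)/2}\|u_0\|^p_{L^p_b}$ with $r=e^{2s}$, and optimize to obtain $\gamma_\mu(a,b)\le C_2\mu^{-2/(p-4+b)}$. Your added remarks (verifying $h_0\in\Gamma$ and noting that the $\mu$-dependence of $s_1,s_2$ is irrelevant once one maximizes over all $r>0$) only make explicit what the paper leaves implicit.
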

 \begin{proof}
 Let us fix the path $h_0(t)=\mathcal{H}(u_0, (1-t)s_1+ts_2)\in\Gamma$. The assumption $(f_3)$ (cf. \eqref{F-low})  implies
 \begin{align*}
 \gamma_{\mu}(a,b)\le  \max_{t\in [0,1]}J(h_0(t))
 \le \max_{r\ge 0}\Big\{\frac{r}{2}\|\nabla u_0\|^{2}_{L^2}-\frac{\mu}{p} r^{\frac{p-2+b}{2}}\|u_0\|^{p}_{L^{p}_{b}}\Big\}.
 \end{align*}
 Since we are assuming $p>4$ we can write
 \begin{align*}
  \gamma_{\mu}(a,b)\le C_2\Big(\frac{1}{\mu}\Big)^{\frac{2}{p-4+b}}\to 0\;\;\mbox{as}\;\; \mu\to \infty
 \end{align*}
 for some $C_2$ that  does not depend on $\mu>0$.
 \end{proof}
Arguing as \cite[Proposition~2.2, Lemma~2.4]{jeanjean1} we can apply the Ekeland's variational principle to functional $\tilde{J}$ in order to  get a Palais-Smale sequence $((v_n,s_n))\subset S(a)\times \mathbb{R}$ which can be used to construct a Palais-Smale sequence $u_n=\mathcal{H}(v_n, s_n)$  with $(u_n)\subset S(a)$ for $J$ at the level $\gamma_{\mu}(a,b)$ satisfying 
 \begin{equation}\label{TM-JPS}
 \begin{aligned}
 J(u_n)=\gamma_{\mu}(a,b)+o_{n}(1)\\
 \end{aligned}
 \end{equation}
 and, for any $\varphi\in H^1(\mathbb{R}^2)$
 \begin{equation}\label{TM-JPSWeak}
 \begin{aligned}
 \int_{\mathbb{R}^2}\nabla u_n\nabla \varphi dx-\int_{\mathbb{R}^2}|x|^{-b}f(u_n)\varphi-\lambda_n \int_{\mathbb{R}^2}u_n\varphi=o_{n}(1)\|\varphi\|.
 \end{aligned}
 \end{equation}
 for some sequence $(\lambda_n)\subset\mathbb{R}$. By \eqref{Pohozaev3} with $G(x,t)=|x|^{-b}F(t)-\frac{\lambda}{2} t^2$, we can verify that any solution $u\in H^{1}(\mathbb{R}^2)$ of \eqref{TM-eq} we must satisfy the identity
\begin{align*}
\int_{\mathbb{R}^2}|\nabla u|^2dx+(2-b)\int_{\mathbb{R}^2}|x|^{-b}F(u)dx-\int_{\mathbb{R}^2}|x|^{-b}f(u)u dx=0.
\end{align*}
 Analogously to \eqref{EQ1**} the limit (cf. \cite[Proposition 2.2]{jeanjean1})
$$
{\partial_s}\tilde{J}(v_n,s_n) \to 0 \quad \mbox{as} \quad n \to \infty,
$$
implies 
\begin{equation}\label{PokoTM}
 P(u_n)=\int_{\mathbb{R}^2}|\nabla u_n|^2dx+(2-b)\int_{\mathbb{R}^2}|x|^{-b}F(u_n)dx-\int_{\mathbb{R}^2}|x|^{-b}f(u_n)u_ndx\to 0
 \end{equation}
 as $n\to\infty$, where $u_n=\mathcal{H}(v_n,s_n).$
 Also, $(u_n)$ is bounded in $H^1(\mathbb{R}^2)$ and from \eqref{TM-JPSWeak} the sequence $(\lambda_n)$ satisfies
 \begin{equation}\label{lambda_nassym}
 \lambda_n=\frac{1}{a^2}\Big\{\int_{\mathbb{R}^2}|\nabla u_n|^2dx-\int_{\mathbb{R}^2}|x|^{-b}f(u_n)u_ndx\Big\}+o_n(1)
 \end{equation}
 
 \begin{lemma}\label{2limMax} Let $\theta>4$ be given by assumption $(f_2)$. Then
 \begin{enumerate}
 \item [$(a)$]
 \begin{equation}\nonumber
 \limsup_{n\to\infty}\int_{\mathbb{R}^2}|x|^{-b}F(u_n)dx\le \frac{2}{\theta-4+b}\gamma_{\mu}(a,b).
 \end{equation}
 \item [$(b)$]
 \begin{equation}\nonumber
  \limsup_{n\to\infty}\int_{\mathbb{R}^2}|\nabla u_n|^{2}dx\le \frac{2(\theta-2+b)}{\theta-4+b}\gamma_{\mu}(a,b).
 \end{equation}
 \end{enumerate}
  In particular, there is $\mu^{*}>0$ such that
  \begin{equation}\nonumber
 \limsup_{n\to\infty}\int_{\mathbb{R}^2}|\nabla u_n|^{2}dx<1-a^2,\;\;\mbox{for all}\;\; \mu\ge \mu^{*}.
  \end{equation}
 \end{lemma}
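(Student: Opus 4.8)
The plan is to extract everything we need from the three facts already available along the Palais--Smale sequence: the energy identity \eqref{TM-JPS}, the Pohozaev-type asymptotic relation \eqref{PokoTM}, and the superquadraticity condition $(f_2)$, which in particular forces $F\ge 0$ on $\mathbb{R}$ and the pointwise bound $tf(t)\ge\theta F(t)$ for $t\neq 0$, hence
\[
\int_{\mathbb{R}^2}|x|^{-b}f(u_n)u_n\,dx\ \ge\ \theta\int_{\mathbb{R}^2}|x|^{-b}F(u_n)\,dx\qquad\text{for all }n.
\]

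First I would rewrite \eqref{TM-JPS} as $\|\nabla u_n\|_{L^2}^2=2\gamma_\mu(a,b)+2\int_{\mathbb{R}^2}|x|^{-b}F(u_n)\,dx+o_n(1)$ and use \eqref{PokoTM} in the form $\int_{\mathbb{R}^2}|x|^{-b}f(u_n)u_n\,dx=\|\nabla u_n\|_{L^2}^2+(2-b)\int_{\mathbb{R}^2}|x|^{-b}F(u_n)\,dx+o_n(1)$. Substituting the latter into the displayed inequality gives $\|\nabla u_n\|_{L^2}^2\ge(\theta-2+b)\int_{\mathbb{R}^2}|x|^{-b}F(u_n)\,dx+o_n(1)$, and combining this with the rewriting of \eqref{TM-JPS} yields
\[
(\theta-4+b)\int_{\mathbb{R}^2}|x|^{-b}F(u_n)\,dx\ \le\ 2\gamma_\mu(a,b)+o_n(1).
\]
Since $\theta>4$ and $0<b<2$, we have $\theta-4+b>0$, so dividing and passing to the $\limsup$ proves $(a)$. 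Item $(b)$ then follows immediately by inserting $(a)$ back into the rewriting of \eqref{TM-JPS}: $\limsup_{n}\|\nabla u_n\|_{L^2}^2\le 2\gamma_\mu(a,b)+\tfrac{4}{\theta-4+b}\gamma_\mu(a,b)=\tfrac{2(\theta-2+b)}{\theta-4+b}\gamma_\mu(a,b)$.

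For the last assertion I would invoke Lemma~\ref{lemmaminx-0}, which gives $\gamma_\mu(a,b)\to 0$ as $\mu\to\infty$. Since $a\in(0,1)$, the quantity $1-a^2$ is a fixed positive number, so there exists $\mu^{*}=\mu^{*}(a,b)>0$ with $\tfrac{2(\theta-2+b)}{\theta-4+b}\gamma_\mu(a,b)<1-a^2$ for every $\mu\ge\mu^{*}$; by $(b)$ this forces $\limsup_{n}\|\nabla u_n\|_{L^2}^2<1-a^2$ for all such $\mu$ --- precisely the threshold on $\mu$ needed to apply Lemma~\ref{lemma-conv} later. There is no genuinely delicate step here: the argument is pure bookkeeping once \eqref{TM-JPS} and \eqref{PokoTM} are in hand (both guaranteed by \cite[Proposition~2.2]{jeanjean1}). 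The only points to watch are that the $o_n(1)$ terms are literal error sequences that do not interact with the fixed constants, that the coefficient $\theta-4+b$ is strictly positive --- this is exactly where the strengthened exponent $\theta>4$ in $(f_2)$ is used, in place of the usual $\theta>2$ --- and that $F\ge 0$ enters with the correct sign both in \eqref{TM-JPS} and in the inequality above.
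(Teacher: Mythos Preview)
Your proof is correct and follows essentially the same route as the paper: both arguments combine the energy identity \eqref{TM-JPS}, the asymptotic Pohozaev relation \eqref{PokoTM}, and the Ambrosetti--Rabinowitz bound from $(f_2)$ to isolate the $F$-integral, then feed $(a)$ back into \eqref{TM-JPS} to obtain $(b)$, and finally invoke Lemma~\ref{lemmaminx-0} for the threshold $\mu^{*}$. The only difference is cosmetic---the paper first forms the combination $2J(u_n)+P(u_n)$ and then substitutes, whereas you substitute directly---but the algebra and the constants match exactly.
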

 \begin{proof}
 From \eqref{TM-JPS} and \eqref{PokoTM},  it follows that
 \begin{equation}\nonumber
 2J(u_n)+P(u_n)=2\gamma_{\mu}(a,b)+o_n(1)
 \end{equation}
 which yields
 \begin{equation}\nonumber
 2\int_{\mathbb{R}^2}|\nabla u_n|^2dx-b\int_{\mathbb{R}^2}|x|^{-b}F(u_n)dx-\int_{\mathbb{R}^2}|x|^{-b}f(u_n)u_n dx=2\gamma_{\mu}(a,b)+o_n(1).
 \end{equation}
 By using  $J(u_n)=\gamma_{\mu}(a,b)+o_{n}(1)$ we also can write
 \begin{equation}\nonumber
 4\gamma_{\mu}(a,b)+(4-b)\int_{\mathbb{R}^2}|x|^{-b}F(u_n)dx -\int_{\mathbb{R}^2}|x|^{-b}f(u_n)u_n dx=2\gamma_{\mu}(a,b)+o_n(1).
 \end{equation}
 Hence, for $\theta>4$ given by $(f_2)$
 \begin{equation}\nonumber
 2\gamma_{\mu}(a,b)+o_n(1)= \int_{\mathbb{R}^2}|x|^{-b}f(u_n)u_n dx-(4-b)\int_{\mathbb{R}^2}|x|^{-b}F(u_n)dx\ge (\theta-4+b)\int_{\mathbb{R}^2}|x|^{-b}F(u_n)dx
 \end{equation}
 which proves  item $(a)$. Now, $J(u_n)=\gamma_{\mu}(a,b)+o_{n}(1)$ ensures
 \begin{equation}\nonumber
 \int_{\mathbb{R}^2}|\nabla u_n|^2dx=2\gamma_{\mu}(a,b)+2\int_{\mathbb{R}^2}|x|^{-b}F(u_n)dx+o_n(1).
 \end{equation}
 Thus, item $(a)$ implies 
 \begin{equation}\nonumber
 \limsup_{n\to\infty}\int_{\mathbb{R}^2}|\nabla u_n|^2dx\le \frac{2(\theta-2+b)}{\theta-4+b}\gamma_{\mu}(a,b).
 \end{equation}
 This proves $(b)$. Further, the existence of $\mu^*>0$ is ensured by Lemma~\ref{lemmaminx-0} and $(b)$ .
 \end{proof}
 \begin{lemma}\label{Lagra} Let $\mu\ge \mu^{*}$, where $\mu^*$ is given by Lemma~\ref{2limMax}.
 Then, the $(\lambda_n)$ is a bounded sequence such that 
 \begin{equation}\nonumber
 \limsup_{n\to\infty}|\lambda_n|\le \frac{4}{a^2}\frac{(\theta-1+b/2)}{\theta-4+b}\gamma_{\mu}(a,b)\;\;\mbox{and}\;\; \limsup_{n\to\infty}\lambda_{n}=-\frac{2-b}{a^2}\liminf_{n\to\infty}\int_{\mathbb{R}^2}|x|^{-b}F(u_n)dx.
 \end{equation}
 \end{lemma}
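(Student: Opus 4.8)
The plan is to read everything off from three facts already available: the Lagrange--multiplier identity \eqref{lambda_nassym}, the Pohozaev--type limit \eqref{PokoTM}, and the two uniform estimates of Lemma~\ref{2limMax}. The first step is to use \eqref{PokoTM} to eliminate the nonlinear term $\int_{\mathbb{R}^2}|x|^{-b}f(u_n)u_n\,dx$ from \eqref{lambda_nassym}. Rewriting \eqref{PokoTM} as
\begin{equation*}
\int_{\mathbb{R}^2}|x|^{-b}f(u_n)u_n\,dx=\int_{\mathbb{R}^2}|\nabla u_n|^2\,dx+(2-b)\int_{\mathbb{R}^2}|x|^{-b}F(u_n)\,dx+o_n(1)
\end{equation*}
and substituting into \eqref{lambda_nassym}, the two copies of $\int_{\mathbb{R}^2}|\nabla u_n|^2\,dx$ cancel and one is left with the clean relation
\begin{equation*}
a^2\lambda_n=-(2-b)\int_{\mathbb{R}^2}|x|^{-b}F(u_n)\,dx+o_n(1).
\end{equation*}

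From this relation the second assertion is immediate: since $0<b<2$ and $F\ge 0$ on $\mathbb{R}$ by $(f_2)$, passing to the $\limsup$ and using $\limsup_{n\to\infty}(-c_n)=-\liminf_{n\to\infty}c_n$ gives $\limsup_{n\to\infty}\lambda_n=-\tfrac{2-b}{a^2}\liminf_{n\to\infty}\int_{\mathbb{R}^2}|x|^{-b}F(u_n)\,dx$; moreover $\int_{\mathbb{R}^2}|x|^{-b}F(u_n)\,dx$ is bounded by Lemma~\ref{2limMax}$(a)$, so the same relation already shows that $(\lambda_n)$ is bounded.

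For the explicit bound on $|\lambda_n|$ I would instead go back to \eqref{lambda_nassym} and estimate by the triangle inequality, using that $\int_{\mathbb{R}^2}|x|^{-b}f(u_n)u_n\,dx\ge \theta\int_{\mathbb{R}^2}|x|^{-b}F(u_n)\,dx\ge 0$ by $(f_2)$, so that
\begin{equation*}
a^2|\lambda_n|\le \int_{\mathbb{R}^2}|\nabla u_n|^2\,dx+\int_{\mathbb{R}^2}|x|^{-b}f(u_n)u_n\,dx+o_n(1).
\end{equation*}
The first integral is controlled by Lemma~\ref{2limMax}$(b)$; for the second I would invoke the rewriting of \eqref{PokoTM} displayed above together with Lemma~\ref{2limMax}$(a)$ and $(b)$, which gives $\limsup_{n\to\infty}\int_{\mathbb{R}^2}|x|^{-b}f(u_n)u_n\,dx\le\tfrac{2(\theta-2+b)+2(2-b)}{\theta-4+b}\,\gamma_\mu(a,b)=\tfrac{2\theta}{\theta-4+b}\,\gamma_\mu(a,b)$. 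Adding the two contributions then yields $\limsup_{n\to\infty}a^2|\lambda_n|\le\big(\tfrac{2(\theta-2+b)}{\theta-4+b}+\tfrac{2\theta}{\theta-4+b}\big)\gamma_\mu(a,b)=\tfrac{4(\theta-1+b/2)}{\theta-4+b}\gamma_\mu(a,b)$, which is the claimed estimate.

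There is no genuine obstacle here: the statement is essentially a bookkeeping consequence of \eqref{lambda_nassym}, \eqref{PokoTM} and Lemma~\ref{2limMax}. The only points needing some care are checking that all the integrals are finite for each fixed $n$ (via \eqref{ft<exp}, the compact embedding \eqref{n=2compact} and Lemma~\ref{Ibero}$(i)$, using Hölder's inequality), keeping track of the $o_n(1)$ remainders coming from \eqref{lambda_nassym} and \eqref{PokoTM}, exploiting $F\ge 0$ (hence $tf(t)\ge\theta F(t)\ge 0$) to control signs and to convert a $\limsup$ into a $-\liminf$, and the elementary identities $2(\theta-2+b)+2(2-b)=2\theta$ and $2(\theta-2+b)+2\theta=4(\theta-1+b/2)$ together with $\theta-4+b>0$ that make the constants collapse to the stated form.
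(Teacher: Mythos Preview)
Your proof is correct and follows essentially the same route as the paper: both arguments combine \eqref{lambda_nassym} with \eqref{PokoTM} to obtain $a^2\lambda_n=-(2-b)\int_{\mathbb{R}^2}|x|^{-b}F(u_n)\,dx+o_n(1)$ for the second assertion, and both bound $|\lambda_n|$ via the triangle inequality in \eqref{lambda_nassym} together with Lemma~\ref{2limMax} and the estimate $\limsup_n\int|x|^{-b}f(u_n)u_n\,dx\le \tfrac{2\theta}{\theta-4+b}\gamma_\mu(a,b)$ coming from \eqref{PokoTM}. The only difference is the order of presentation; your argument is in fact somewhat more detailed than the paper's.
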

 \begin{proof}
 Firstly, from \eqref{PokoTM} and Lemma~\ref{2limMax} we obtain
 \begin{equation}\nonumber
 \limsup_{n\to\infty}\int_{\mathbb{R}^2}|x|^{-b}f(u_n)u_ndx\le \frac{2\theta}{\theta-4+b}\gamma_{\mu}(a,b).
 \end{equation}
 From \eqref{lambda_nassym} we get
 \begin{equation}\label{a^2lambda}
 a^2\lambda_n=\int_{\mathbb{R}^2}|\nabla u_n|^2dx-\int_{\mathbb{R}^2}|x|^{-b}f(u_n)u_ndx+o_n(1).
 \end{equation}
 By using Lemma~\ref{2limMax} again,  it follows that $(\lambda_n)$ is a bounded sequence such that 
 \begin{equation}\nonumber
 \limsup_{n\to\infty}|\lambda_n|\le \frac{4}{a^2}\frac{(\theta-1+b/2)}{\theta-4+b}\gamma_{\mu}(a,b).
 \end{equation}
 In addition, by combining \eqref{PokoTM} and  \eqref{a^2lambda} we have 
 $$
  \limsup_{n\to\infty}\lambda_{n}=-\frac{2-b}{a^2}\liminf_{n\to\infty}\int_{\mathbb{R}^2}|x|^{-b}F(u_n)dx.
 $$
 \end{proof}
Now, since $(u_n)$ is bounded in $H^{1}(\mathbb{R}^2)$, we can assume $u_n \rightharpoonup u$ in $H^{1}(\mathbb{R}^2)$. The next result shows that  $u\not\equiv 0$, at least for $\mu\ge \mu^*$.

\begin{lemma}\label{weak=no0} Assume that the  Palais-Smale sequence $(u_n)$ is such that  $u_n \rightharpoonup u$ in $H^{1}(\mathbb{R}^2)$. Let $\mu^*$ be given by Lemma~\ref{2limMax}. Then, for any $\mu\ge \mu^*$ we must have  $u\not\equiv0$.
\end{lemma}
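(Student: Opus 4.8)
The plan is to argue by contradiction, assuming $u\equiv 0$. Since $(u_n)$ is bounded in $H^{1}(\mathbb{R}^2)$ and $u_n\rightharpoonup 0$, after passing to a subsequence we may assume $u_n(x)\to 0$ a.e.\ in $\mathbb{R}^2$; moreover, by the compact embedding \eqref{n=2compact} we get $u_n\to 0$ in $L^{q}_{b}(\mathbb{R}^2)$ for every $q\ge 2$. The crucial observation is that, because $\mu\ge\mu^*$, Lemma~\ref{2limMax}$(b)$ gives $\limsup_{n\to\infty}\|\nabla u_n\|^{2}_{L^2}<1-a^2$, so the hypotheses of Lemma~\ref{lemma-conv} are met with weak limit $0$.

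Next I would invoke Lemma~\ref{lemma-conv}$(b)$-$(ii)$ to deduce $F(u_n)\to F(0)=0$ and $f(u_n)u_n\to 0$ in $L^{1}_{b}(\mathbb{R}^2)$, that is, $\int_{\mathbb{R}^2}|x|^{-b}F(u_n)\,dx\to 0$ and $\int_{\mathbb{R}^2}|x|^{-b}f(u_n)u_n\,dx\to 0$. Substituting these two limits into the Pohozaev-type relation \eqref{PokoTM}, namely $P(u_n)=\|\nabla u_n\|^{2}_{L^2}+(2-b)\int_{\mathbb{R}^2}|x|^{-b}F(u_n)\,dx-\int_{\mathbb{R}^2}|x|^{-b}f(u_n)u_n\,dx\to 0$, forces $\|\nabla u_n\|^{2}_{L^2}\to 0$.

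Finally I would return to \eqref{TM-JPS}: since $J(u_n)=\tfrac12\|\nabla u_n\|^{2}_{L^2}-\int_{\mathbb{R}^2}|x|^{-b}F(u_n)\,dx$, the two facts just obtained yield $J(u_n)\to 0$. On the other hand $J(u_n)\to\gamma_{\mu}(a,b)$ and $\gamma_{\mu}(a,b)>J_{**}>0$ by Corollary~\ref{coro-cross}, which is a contradiction. Hence $u\not\equiv 0$.

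The step that really requires care --- and where the earlier preparation pays off --- is verifying that the Trudinger--Moser subcritical condition $\limsup_{n}\|\nabla u_n\|^{2}_{L^2}<1-a^2$ demanded by Lemma~\ref{lemma-conv} holds: this is exactly what the choice of $\mu^*$ in Lemma~\ref{2limMax} is tailored to guarantee, and without it concentration phenomena could destroy the convergences $F(u_n)\to 0$ and $f(u_n)u_n\to 0$. Everything else is a routine assembly of limits already established in \eqref{TM-JPS}, \eqref{PokoTM} and Lemma~\ref{2limMax}.
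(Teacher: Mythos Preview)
Your proof is correct and follows essentially the same contradiction strategy as the paper: use Lemma~\ref{2limMax} to place the sequence in the regime where Lemma~\ref{lemma-conv} applies, deduce that both weighted integrals vanish, and then force $\|\nabla u_n\|_{L^2}^2\to 0$, contradicting $\gamma_{\mu}(a,b)>0$. The only difference is in how $\|\nabla u_n\|_{L^2}^2\to 0$ is extracted: the paper routes this through the Lagrange multiplier identity \eqref{a^2lambda} together with Lemma~\ref{Lagra} (showing $\limsup_n\lambda_n\le 0$ and then $\|\nabla u_n\|_{L^2}^2=\lambda_n a^2+o_n(1)$), whereas you read it off directly from the Pohozaev relation \eqref{PokoTM}. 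Your route is slightly more economical, since it avoids invoking Lemma~\ref{Lagra} altogether.
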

\begin{proof}
From Lemma~\ref{lemma-conv} and Lemma~\ref{2limMax} it follows that
 $$F(u_n)\to F(u) \;\;\mbox{and} \;\; f(u_n)u_n\to f(u)u\;\;\mbox{in} \;\; L^{1}_{b}(\mathbb{R}^2).$$
 By contradiction, suppose that $u\equiv 0$. Thus,
 $$\lim_{n\to\infty}\int_{\mathbb{R}^2}|x|^{-b}F(u_n)dx=\lim_{n\to \infty}\int_{\mathbb{R}^2}|x|^{-b}f(u_n)u_n dx=0$$
 and Lemma~\ref{Lagra} ensures
 $$
   \limsup_{n\to\infty}\lambda_{n}\le 0.
 $$
 By using Lemma~\ref{lemma-conv} again and the identity \eqref{a^2lambda} we conclude  that
 $$
 \lambda_n a^2=  \|\nabla u_n\|^2_{L^2}+o_n(1)
 $$
 and consequently
 $$0\le \liminf_{n\to\infty}\|\nabla u_n\|^2_{L^2}\le \limsup_{n\to\infty}\|\nabla u_n\|^2_{L^2}=\limsup_{n\to\infty}\lambda_n a^2\le 0$$
 which leads to  $\|\nabla u_n\|^2_{L^2}\to 0$ and contradicts $\gamma_{\mu}(a,b)>0$. Thus, we must have $u\not\equiv0$.
\end{proof}
\section{Proof of Theorem~\ref{thm-TM}}
Let $u$ be the weak limit of the  Palais-Smale sequence $(u_n)\subset S(a)$ such as in  \eqref{TM-JPS}, \eqref{TM-JPSWeak}, \eqref{PokoTM} and  \eqref{lambda_nassym}. From Lemma~\ref{weak=no0}, we must have $u\not\equiv 0$. For $\mu>\mu^*$,  Lemma~\ref{lemma-conv},   Lemma~\ref{2limMax}  and  Lemma~\ref{Lagra} imply
\begin{equation}\nonumber
 \limsup_{n\to\infty}\lambda_{n}=-\frac{2-b}{a^2}\liminf_{n\to\infty}\int_{\mathbb{R}^2}|x|^{-b}F(u_n)dx=-\frac{2-b}{a^2}\liminf_{n\to\infty}\int_{\mathbb{R}^2}|x|^{-b}F(u)dx<0.
 \end{equation}
Thus, up to a subsequence, we can assume
$$
\lim_{n\to\infty}\lambda_n=\tilde{\lambda}_{a}<0.
$$
From \eqref{TM-JPSWeak}, $u$ satisfies the equation 
 \begin{equation}\nonumber
 -\Delta u-|x|^{-b}f(u)=\tilde{\lambda}_{a} u \;\;\mbox{in}\;\; \mathbb{R}^2
 \end{equation}
from which
 \begin{equation}\label{fuuuu}
 \|\nabla u\|^{2}_{L^{2}}-\tilde{\lambda}_a \|u\|^2_{L^2} = \int_{\mathbb{R}^2} |x|^{-b}f(u)udx.
 \end{equation}
But, 
 \begin{equation}\nonumber
 \|\nabla u_n\|^{2}_{L^{2}}-\lambda_n \|u_n\|^2_{L^2} = \int_{\mathbb{R}^2} |x|^{-b}f(u_n)u_ndx+o_n(1)
 \end{equation}
 which also yields
 \begin{equation}\nonumber
 \|\nabla u_n\|^{2}_{L^{2}}-\tilde{\lambda}_{a} \|u_n\|^2_{L^2} = \int_{\mathbb{R}^2} |x|^{-b}f(u_n)u_ndx+o_n(1).
 \end{equation}
 Thus, from Lemma~\ref{lemma-conv} and \eqref{fuuuu} we also have 
  \begin{equation}\nonumber
 \lim_{n\to\infty}(\|\nabla u_n\|^{2}_{L^{2}}-\tilde{\lambda}_{a} \|u_n\|^2_{L^2} )=\|\nabla u\|^{2}_{L^{2}}-\tilde{\lambda}_a \|u\|^2_{L^2}. 
 \end{equation}
 Since $\tilde{\lambda}_{a}<0$, the expression
 $$\|u\|_{\lambda_a}=(\|\nabla u\|^{2}_{L^{2}}-\tilde{\lambda}_a \|u\|^2_{L^2})^{{1}/{2}}$$ defines a norm on $H^1(\mathbb{R}^2)$ which is equivalent to the classical $\|u\|$. So, the above limit implies that $u_n\to u$ strongly in $H^1(\mathbb{R}^2)$.
 Hence,  $u\in S(a)$ and it is a non-trivial  solution for the equation in  \eqref{problemPP} with $\lambda=\lambda_a=-\tilde{\lambda}_{a}>0$.

\end{document}